\newcommand{\N}{\mathbb{N}}
\newtheorem{theorem}{Theorem}[section]
\newtheorem{prop}[theorem]{Proposition}
\newtheorem{conjecture}[theorem]{Conjecture}
\theoremstyle{definition}
\newtheorem{remark}[theorem]{Remark}
\let\orgdescriptionlabel\descriptionlabel
\renewcommand*{\descriptionlabel}[1]{%
  \let\orglabel\label
  \let\label\@gobble
  \phantomsection
  \edef\@currentlabel{#1}%
  \let\label\orglabel
  \orgdescriptionlabel{#1}%
}
\renewcommand{\P}{\mathbb{P}}
\newcommand{\x}{\boldsymbol{x}}
\newcommand{\1}{\mathbbm{1}}
\newcommand{\G}{\mathscr{G}}
\newcommand{\X}{\mathbf{X}}
\newcommand{\R}{\mathbb{R}}
\renewcommand{\d}{\mathrm{d}}
\newcommand{\Z}{\mathbb{Z}}
\renewcommand{\d}{\mathrm{d}}
\newcommand{\bfP}{\mathbf{P}}
\newcommand{\bfE}{\mathbf{E}}
\newcommand{\calG}{\mathcal{G}}
\newcommand{\scrO}{\mathscr{O}}
\newcommand{\scrS}{\mathscr{S}}
\title{Phase transitions for contact processes on one-dimensional networks}
\author{
Benedikt Jahnel \orcidlink{0000-0002-4212-0065}\thanks{Technische Universit\"at Braunschweig, Universit\"atsplatz 2, 38106 Braunschweig, Germany}\thanksgap{0.4ex} \thanks{Weierstrass Institute for Applied Analysis and Stochastics, Mohrenstr.\ 39, 10117 Berlin, Germany} \\ benedikt.jahnel@tu-braunschweig.de
\and
Lukas L\"{u}chtrath \orcidlink{0000-0003-4969-806X}\thanksmark{2}\\lukas.luechtrath@wias-berlin.de \\
\and
Christian M\"{o}nch \orcidlink{0000-0002-6531-6482}\thanks{Johannes Gutenberg-Universität Mainz, Staudingerweg 9, 55128 Mainz, Germany} \\ cmoench@uni-mainz.de
}
\date{\today}
\begin{document}
\maketitle

\begin{spacing}{0.9}
\begin{abstract} 
\noindent We study the survival/extinction phase transition for contact processes with quenched disorder. The disorder is given by a locally finite random graph with vertices indexed by $\Z$ that is assumed to be invariant under index shifts and augments the nearest-neighbour lattice by additional long-range edges. We provide sufficient conditions that imply the existence of a subcritical phase and therefore the non-triviality of the phase transition. Our results apply to instances of scale-free random geometric graphs with any integrable degree distribution. The present work complements previously developed techniques to establish the existence of a subcritical phase on Poisson--Gilbert graphs and Poisson--Delaunay triangulations (M\'enard et al., \emph{Ann.\ Sci.\ Éc.\ Norm.\ Supér.}, 2016), on Galton--Watson trees (Bhamidi et al., \emph{Ann.\ Probab.}, 2021) and on locally tree-like random graphs (Nam et al., \emph{Trans.\ Am.\ Math.\ Soc.}, 2022), all of which require exponential decay of the degree distribution. Two applications of our approach are particularly noteworthy: First, for Gilbert graphs derived from stationary point processes on $\R$ marked with i.i.d.~random radii, our results are sharp. We show that there is a non-trivial phase transition if and only if the graph is locally finite. Second, for independent Bernoulli long-range percolation on $\Z$, with coupling constants $J_{x,y}\asymp |x-y|^{-\delta}$, we verify a conjecture of Can (\emph{Electron.\ Commun.\ Probab.}, 2015) stating the non-triviality of the phase transition whenever $\delta>2$. Although our approach utilises the restrictive topology of the line, we believe that the results are indicative of the behaviour of contact processes on spatial random graphs also in dimensions $d>1$ as long as the degree distribution of the underlying network has at least finite \(d\)-th moment. We support this by proving that no phase transition exists if the \(d\)-the moment is infinite.

\smallskip
\noindent\footnotesize{{\textbf{AMS-MSC 2020}: 60K35 (primary); 05C82, 91D30 (secondary)}

\smallskip
\noindent\textbf{Key Words}: contact process, Gilbert graph, long-range percolation, continuum percolation, random connection model, scale-free network, SIS epidemics}
\end{abstract}
\end{spacing}

%%%%%%%%%%%%%%%%%%%%%%%%%%%%%
%%%% Introduction%%%%%%%%%%%%
%%%%%%%%%%%%%%%%%%%%%%%%%%%%
\section{Background and motivation}
We investigate contact processes on spatial random graphs. Let us briefly introduce the contact process and the question of its global survival/extinction, both on fixed and on random graphs. 
\subsection{Survival and extinction of contact processes}
The contact process (sometimes also referred to as the SIS epidemic model) is a Markovian interacting particle system, which was first introduced to the mathematical literature by Harris \cite{harris_contact_1974} and has been studied extensively on hypercubic lattices throughout the 1970s, 1980s and early 1990s. An introduction to this classical theory as well as an overview of its main results is provided in the monographs \cite{liggett_stochastic_1999,liggett_interacting_1985}. In the 2000s renewed interest in the contact process emerged in the context of complex networks, and in particular researchers began to investigate the behaviour of the process on large but finite random graphs as a toy model for the spread of disease or information in a variety of mesoscopic systems, see, for example,~\cite{DurrettDoG2024} for an account of rigorous results in this area.

Together with percolation and oriented percolation, the contact process is one of the simplest random interacting systems that may display a non-trivial phase transition, the existence of which is the subject of the present paper. More precisely, let $G$ be a locally finite connected graph with vertex set $V(G)$ and edge set 
\[
	E(G)\subset V(G)^{[2]}:=\{ \{x,y\}\colon  x\in V(G), y\in V(G), x\neq y\}.
\]
In the contact process on $G$, \emph{infected} vertices in $V(G)$ infect their \emph{susceptible} neighbours at exponential rate $\lambda>0$ and turn back into susceptible vertices at rate $1$. Formally, we view this process as a Markov process with values in the set of subsets of $V(G)$ and generator $L$ given by
\[
Lf(A)= \lambda \sum_{\{u,v\}\in E(G)}\1\{u\in A, v\notin A \}\big(f(A \cup \{v\})-f(A)\big) + \sum_{v\in V(G)}\1\{v\in A \}\big(f(A \setminus \{v\})-f(A)\big),
\]
where $\lambda>0$ is called the \emph{infection rate} and $f$ is any non-negative bounded function on $V(G)$. Starting at time $0$ with an initially infected set $A_0\subset V(G)$, we denote by $\xi^{A_0}_t\subset V(G)$ the set of infected vertices at time $t$ under the dynamics prescribed by $L$. The law of $\xi$ is denoted by $\P_\lambda$. For the purpose of this paper, we always consider \emph{rooted graphs} $(G,o)$, i.e., we distinguish a specific vertex $o\in V(G)$ as the root of $G$. We usually set $A_0=\{o\}$ and omit the initially infected set from the notation in this case. We say that the contact process (with infection rate $\lambda$) \emph{survives on }$G$ if $\P_\lambda (\xi_t\neq \emptyset \text{ for all }t>0)>0$ and define the corresponding \emph{critical infection rate} by
\[
	\lambda_\mathsf{c}(G):=\inf \big\{\lambda>0\colon  \P_\lambda (\xi_t\neq \emptyset \text{ for all }t>0)>0\big\}.
\]
If $\lambda_\mathsf{c}(G)\in(0,\infty)$, then the survival/extinction phase transition for $\xi$ on $G$ is called {\em non-trivial}. 

Since $\emptyset$ is an absorbing state of the dynamics, it follows immediately that $\lambda_\mathsf{c}(G)=\infty$ for any finite graph $G$. Conversely, if $G$ is assumed to be infinite we have \begin{equation}\label{eq:aprioribounds}\lambda_\mathsf{c}(G)\leq \lambda_\mathsf{c}(\mathbb{Z}_{\textup{nn}})<\infty,\end{equation}
where $\mathbb{Z}_{\textup{nn}}$ denotes the nearest-neighbour graph on $\Z$, which we always consider as rooted at $0$. The second inequality in \eqref{eq:aprioribounds} is due to Harris' foundational work~\cite{harris_contact_1974} and the first inequality follows from the fact that $\lambda_\mathsf{c}(\cdot)$ is monotone with respect to the canonical partial order on rooted graphs. The non-triviality of the survival/extinction phase transition thus reduces to the question whether $\lambda_\mathsf{c}(G)>0$, in which case we say that the contact process on $G$ \emph{has a subcritical phase}. A simple comparison with a branching process shows that $\lambda_\mathsf{c}(G)>0$ whenever the vertices of $G$ have a uniformly bounded degree. At the other end of the spectrum, it is not difficult to construct graphs with unbounded degrees on which $\xi$ survives for any $\lambda>0$, for instance by considering a sequence of sufficiently rapidly growing star-graphs whose centre vertices are placed on a copy of the line graph $\mathbb{Z}_{\textup{nn}}$. Moreover, if we interpret the breadth-first exploration of a Bernoulli bond percolation cluster as a contact-type process in which every vertex can only be infected once, then an elementary coupling argument shows that any graph $G$ with bond percolation threshold $p_\mathsf{c}(G)=0$ also satisfies $\lambda_\mathsf{c}(G)=0$.

%%%%%%%%%%%%%%%%%%%%%%%%%%%%%%%%%%%%
%%%%%%%%%%% CP on RG %%%%%%%%%%%%%%%
%%%%%%%%%%%%%%%%%%%%%%%%%%%%%%%%%%%

\subsection{Contact processes on random graphs}

Let us now consider a \emph{random} rooted graph $(\G,o)$ with distribution $\bfP$. We mostly consider cases in which $\G$ is infinite but locally finite $\bfP$-almost surely\footnote{If $\G$ is not locally finite, the contact process can still be defined in the way presented until it infects a vertex of infinite degree. If $\G$ is also connected, this explosion event always has positive probability and we consequently have $\lambda_\mathsf{c}(\G)=0$.}. Furthermore, we assume $\bfP$ to be invariant and ergodic with respect to shifts of the root, e.g., see~\cite{benjamini_ergodic_2012, Khezeli_2018} for discussions of the concept. Note that these assumptions are naturally satisfied in many cases of interest, for instance whenever $\G$ arises as a local limit of a sequence of finite graphs in which the degree of the root is uniformly integrable~\cite{aldous_processes_2018}, or if $\G$ is the infinite cluster of some translation-invariant percolation model on the hypercubic lattice. We focus mainly on the latter case. Note further, that monotonicity of $\lambda_\mathsf{c}(\cdot)$ allows us to assume $(\G,o)$ to be connected, either by incorporating additional edges deterministically or by considering an infinite cluster of the original graph instance under investigation. Ergodicity then ensures that $\lambda_\mathsf{c}\equiv\lambda_\mathsf{c}(\G)$ is non-random.

Let $\mathsf{deg}_{\G}(o)$ denote the number of edges incident to the root of $\G$, or \emph{root degree} for short. We say that $\G$ has \emph{unbounded degrees} if $\bfP(\mathsf{deg}_{\G}(o)\geq k)>0$ for all $k\in\N$. There are surprisingly few examples of random-graph ensembles with unbounded degrees for which the non-triviality of the survival/extinction phase transition for the contact process has been demonstrated. This contrasts with Bernoulli percolation, where the corresponding question is often easier to answer \cite{schulman_long_1983,Lyons1990,penrose_continuum_1991, meester_continuum_1996,deijfen_scale-free_2013,jacob_robustness_2017,gracar_percolation_2021,gracar2023finiteness}. Let us provide some intuition from where this difficulty stems. One way of characterising the percolation phase transition on $\G$ is by looking at sequences of \emph{cut-sets} of edges separating $o$ from infinity. These sets serve as `bottlenecks' for the percolation cluster, and if there are sufficiently many `small' cut-sets, then there must be a subcritical percolation phase. The cut-set characterisation is prominent, for instance, in Lyons' classical works connecting percolation on trees to electrical networks~\cite{Lyons1990,lyons1992random} and in the modern approach of Duminil-Copin and Tassion to the question of sharpness of the percolation phase transition~\cite{duminil2016new,Hugo_sharpnessNN}. It is also closely related to the max-flow/min-cut principle of network optimisation~\cite{ford1962flows}. The infection paths in the contact process, however, are not self-avoiding and in particular may return many times to vertices of high degree, which in turn sustain the infection for a very long time. Intuitively, this means that even if the graph is broken up by cut-sets, the infection gets many attempts at getting through these bottlenecks, if there is an infected vertex of high degree in the vicinity. In general, a far better control on the large-scale geometry of the graph is therefore needed to show that high-degree vertices are sufficiently separated by zones of low connectivity which, in turn, is necessary for the extinction of the contact process.

The random-graph family for which the survival/extinction transition is best understood are Galton--Watson trees. The study of the contact process on Galton--Watson trees was initiated in~\cite{PemStac01}, see also \cite{huang2018contact}. A sufficient and necessary condition for non-triviality of the phase transition is now known due to work of Bhamidi, Nam, Nguyen and Sly \cite{bhamidi_survival_2021}:

\emph{Let $\bfP_\mu$ denote the law of a supercritical Galton--Watson tree $\mathcal{T}$ with offspring distribution $\mu$ conditioned on non-extinction. Then, the contact process on $\bfP_\mu$-almost every $\mathcal{T}$ has a subcritical phase if and only if $\mu$ has an exponential tail.}

\begin{remark}
Since the root vertex of Galton--Watson trees is special, they do not quite fit into our shift-invariant framework. However, if one size-biases the offspring distribution at the root and conditions the tree to be infinite, it is not difficult to see that the resulting infinite \emph{augmented Galton--Watson} tree is distributionally invariant under rerooting; see, for instance, \cite[Chapter~17]{PTRNbook}. Since these modifications have no effect on the existence of exponential tails for the offspring distribution, the result of Bhamidi et al.\ has an equivalent formulation in terms of augmented Galton--Watson trees conditioned on survival.
\end{remark}

The above result for Galton--Watson trees also has consequences for locally tree-like graphs~\cite{bhamidi_survival_2021,nam_critical_2022}, which include a variety of popular network models, such as configuration graphs and inhomogeneous random graphs. Outside the locally tree-like setting, the subcritical phase of the contact process on random graphs is rather poorly understood. The only approach available so far is based on the use of a technically challenging auxiliary process called \emph{cumulative merging percolation}, which was introduced by Ménard and Singh~\cite{menard_percolation_2016}. This approach essentially formalises the above heuristics about quantifying the distributions of zones of good and bad connectivity in the graph. Ménard and Singh applied their approach to the contact process on both the Gilbert graph and the Delaunay triangulation obtained from a homogeneous Poisson point process in $\R^d$. More precisely, they showed the following:

\emph{Let $\bfP$ denote the distribution of either one of the following random graphs,
\begin{itemize}
    \item the origin's cluster in the Gilbert graph with supercritical radius $r\in(0,\infty)$, derived from the Palm version of a unit intensity Poisson point process on $\R^d$, conditioned on being infinite; or
    \item the Delaunay triangulation derived from the Palm version of a unit intensity Poisson point process on $\R^d$ rooted at the origin,
\end{itemize}
then, the corresponding contact process has a subcritical phase $\bfP$-almost surely.}

Shortly after, Can~\cite{can_cp_lrp} applied the same technique to long-range percolation clusters on $\Z$. To state his result, which is the starting point of our own investigation, we first provide an explicit definition of the underlying random graph $\G$. We set $V(\G)=\Z$ and choose $0$ as the root. Each edge $\{x,y\}$ is now included into $E(\G)$ independently of all other edges with probability
\[
\bfP\big(\{x,y\}\in E(\G)\big)=\varphi(|x-y|),\quad \{x,y\}\in\Z^{[2]}.
\]
We call $\varphi$ the \emph{connection function} of the long-range percolation graph. Of particular interest is the case when 
\begin{equation}\label{eq:connect}
\varphi(k)=k^{-\delta} \text{ for some } \delta>1 \text{ and all }k\in\N.    
\end{equation}
 In this setting, we define
\[
\delta^\ast:= \inf\{\delta>1\colon \bfP\text{-almost every } \G \text{ admits a subcritical phase for the contact process}\}.
\]
Can conjectured that $\delta^\ast\leq 2$ and obtained the following bound \cite{can_cp_lrp}: 

\emph{For the contact process on long-range percolation graphs induced by connection functions of the type~\eqref{eq:connect}, the exponent $\delta^\ast$ satisfies}
\begin{equation}\label{eq:cansresult}
\delta^\ast\leq 102.
\end{equation}

\subsection{Our contribution}
We consider the problem of existence of a phase transition for contact processes on stationary random graphs in \emph{one dimension}. Although infinite Galton--Watson trees are arguably the simplest infinite random graphs \emph{without geometry}, our setting lies at the opposite end of the spectrum as we consider \emph{complete confinement by geometry}. In contrast to previous investigations of contact processes on random graphs, which use heavy technical machinery, we obtain in Theorem~\ref{thm:exsubphase} a rather general statement by technically effortless means. Aside from improving upon Can's result~\eqref{eq:cansresult} for long-range percolation graphs, the main contribution of this work, as we see it, is twofold.

First, we provide a `proof of concept': As is illustrated by our Theorems~\ref{thm:DBP} and~\ref{thm:WDRCM}, in the geometric setting, there are simple and ergodic random-graph models producing sparse graphs with \emph{arbitrarily heavy-tailed degree distributions} on which the contact process nevertheless undergoes a non-trivial phase transition. This is in stark contrast to the non-spatial Galton--Watson setting. Moreover, all previous results of this type for spatial models only apply to graphs with exponentially decaying degree tails and heavily rely on independence or asymptotic independence of graph neighbourhoods -- besides the existence of zones of low edge density, we only require invariance, ergodicity and finite expected root degree, see Theorem~\ref{thm:exsubphase}.

The second, and more technical, innovation of the present work is the coupling with a random walk in random environment that we develop to prove Theorem~\ref{thm:exsubphase}. It replaces the `cumulative merging' approach previously used on random geometric graphs and we believe it to be of independent interest.

Admittedly, our technique heavily utilises the restrictive geometry of the line and cannot be readily generalised to higher dimensions. We believe that, in more than one dimension, a result of comparable scope as Theorem~\ref{thm:exsubphase} is currently out of reach. However, we are convinced that the presence of a phase transition on random geometric graphs in which connection probabilities decay sufficiently fast in the vertex distance is \emph{not} an artefact of the one-dimensional setting, and we discuss this matter in greater detail at the end of Section~\ref{sec_WDRCM} where we also give a necessary condition on the degree distribution for the phase transition to exists.

\paragraph{Paper overview.} The remainder of the paper is structured as follows: Our results are presented and discussed in Section~\ref{sec_results}. There, we first state our main result, Theorem~\ref{thm:exsubphase}, and then its implications for specific models, such as long-range percolation and Gilbert graphs derived from Boolean models with additional edges as well as weight-dependent random connection models. Section~\ref{sec_proofs} is dedicated to proofs.
%%%%%%%%%%%%%%%%%%%%%%%%%%%%%%%%%%
%%%%%% Setting / Results%%%%%%%%%
%%%%%%%%%%%%%%%%%%%%%%%%%%%%%%%%% 

\section{Main results and discussion}\label{sec_results}
We proceed with formulating our main results and explain how they relate to previous work and the general spatial case. We begin with the key result of this work, which provides a general sufficient criterium for the existence of a phase transition in the contact process on stationary random graphs embedded in $\Z$. Subsequently, we focus on the consequences of our approach for a number of models that have received attention in the literature.

Consider a random graph $\G=(\Z,E)$ rooted at $0\in\Z$ with law $\bfP$ that satisfies the following assumptions:
\begin{description}
    \item[(A.1)\label{A:stat}] $\G$ is \emph{stationary and ergodic}, i.e., $\bfP$ is invariant and ergodic with respect to additive shifts along $\Z$.
    \item[(A.2)\label{A:sparse}] $\G$ is \emph{sparse}, i.e., we have $\Delta_\G:=\bfE[\mathsf{deg}_{\G}(0)]<\infty$.
\end{description}
Note that, without loss of generality, we may 
henceforth assume that $\G$ is connected: If $\G$ is stationary, then so is the graph obtained from $\G$ by adding all nearest-neighbour edges. Furthermore, ergodicity and sparsity imply \emph{local finiteness} as they ensure that $\bfP$-almost surely no vertex has infinitely many neighbours. 
\begin{remark}
The reason that we equate sparsity with finite expected root degree is that the total number of edges $E_n$ of the subgraph induced by $\{0,\dots,n-1\}$, under Assumptions~\ref{A:stat} and~\ref{A:sparse}, satisfies
\begin{equation}\label{eq:edgedensity}
\lim_{n\to\infty}\frac{E_n}{n}= \frac{1}{2}\Delta_\G\quad \mathbf{P}\text{-almost surely},
\end{equation}
by the pointwise ergodic theorem. Consequently, the number of edges is locally of the same order as the number of vertices precisely if the mean degree of the root is finite.
\end{remark}
We now set up a decomposition of $\G$ inspired by the cut-set characterisation of percolation alluded to in the introduction. We distinguish between nearest-neighbour pairs $\ell_z=\{z-1,z\}, z\in\Z$ which we call \emph{links} and the edges of $\G$. As indicated by the notation, we usually identify links with their \emph{larger} endpoint. Not every link necessarily needs to correspond to an edge of $\G$, but as explained earlier, there is no harm in assuming that $\mathbb{Z}_{\textup{nn}}$ is a subgraph of $\G$. For a given link $\ell_z, z\in\Z$, we define the \emph{number of edges above} $\ell_z$ as
\[
\mathsf{e}(z)=\big|\big\{\{x,y\}\in E\colon x\in(-\infty,z-1], y\in[z,\infty) \big\}\big|\in \N\cup\{0,\infty\}.
\]
We say $z\in \Z$ is a $K$-\emph{cut point} (for $\G$), if there are at most $K\ge 1$ edges of $\G$ above $\ell_z$, i.e., $\mathsf{e}(z)\leq K$. Since we assume $\G$ to be connected, we have that $\mathsf{e}(z)\leq 1$ if and only if $\mathsf{e}(z)=1$. We set $\mathsf{e}=\mathsf{e}(0)$ and focus on the case in which the random variable $\mathsf{e}$ under $\bfP$ is almost-surely finite, in which case $0$ has a positive probability of being a $K$-cut point for some $K<\infty$. It is worth noting that interesting models fulfilling~\ref{A:stat} and~\ref{A:sparse} exist for which \(\mathsf{e}\) is infinite almost-surely and consequently no cut points are present in the graph. However, the existence of cut points is crucial for our method. Let us now state our main theorem for models with cut points.

\begin{theorem}[Sufficient criterium for the existence of the survival/extinction phase transition] \label{thm:exsubphase}
Let $\G$ satisfy the Assumptions~\ref{A:stat} and~\ref{A:sparse} and let $\mathsf{e}$ be $\bfP$-almost-surely finite.
Then, 
\[
	\lambda_{\mathsf{c}}(\G)\equiv \lambda_{\mathsf{c}}\in(0,\infty)\qquad\text{$\bfP$-almost-surely. 
}
\]
\end{theorem}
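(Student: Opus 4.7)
The bound $\lambda_{\mathsf{c}}(\G)\leq \lambda_{\mathsf{c}}(\mathbb{Z}_{\textup{nn}})<\infty$ from~\eqref{eq:aprioribounds} is already available, and ergodicity makes $\lambda_{\mathsf{c}}(\G)$ deterministic, so the entire task is to exhibit one rate $\lambda>0$ for which the contact process dies out $\bfP$-almost surely. The starting point is the cut-point decomposition introduced just before the statement. Since $\mathsf{e}<\infty$ $\bfP$-a.s., we may pick $K=K(\varepsilon)$ with $\bfP(\mathsf{e}\leq K)\geq 1-\varepsilon$. By~\ref{A:stat}, the sequence $(\mathbbm{1}\{\mathsf{e}(z)\leq K\})_{z\in\Z}$ is stationary and ergodic, so the pointwise ergodic theorem yields an a.s.\ positive density (at least $1-\varepsilon$) of $K$-cut points. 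Enumerating them as $\ldots<Z_{-1}<Z_0\leq 0<Z_1<\ldots$, the complements of $\{Z_n\}$ decompose $\Z$ into random blocks $B_n=[Z_n,Z_{n+1})\cap\Z$ with the property that at most $K$ edges of $\G$ join $B_{\leq n}:=\bigcup_{k\leq n}B_k$ to $B_{>n}$; the gaps $Z_{n+1}-Z_n$ form a stationary sequence of finite mean $\leq 1/(1-\varepsilon)$.

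With this geometry in hand, the plan is to couple the one-sided infection front $R_t:=\sup\{z\in\Z\colon z\in\xi_s\text{ for some }s\leq t\}$, together with its mirror $L_t$, to a random walk in the random environment provided by $(Z_n)_{n\in\Z}$. Let $\tau_n:=\inf\{t\geq 0\colon R_t\geq Z_{n+1}\}$ be the first time the right front crosses the bottleneck $\ell_{Z_{n+1}}$. The crux is to establish that, for $\lambda$ sufficiently small, there exists $p=p(\lambda,K)<1/2$ with $p(\lambda,K)\to 0$ as $\lambda\downarrow 0$ such that conditionally on the history up to $\tau_n$, the probability $\bfP_\lambda(\tau_{n+1}<\infty\mid \mathcal{F}_{\tau_n})$ is at most $p$, uniformly in $n$ and in the environment. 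Granted this, the embedded block-index process $N_t:=\max\{n\colon \tau_n\leq t\}$ is stochastically dominated by a biased nearest-neighbour random walk on $\Z_{\geq 0}$ with drift toward $0$, from which the a.s.\ extinction of $\xi$ follows by applying the dominated process to both $R$ and $L$ and a Borel--Cantelli argument.

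The genuine obstacle is the uniform crossing estimate, and it is where the unbounded-degree regime bites. A naive single-attempt bound fails because a block $B_n$ may contain vertices of very high degree which sustain the infection for very long times, during which the $\leq K$ bottleneck edges receive many attempts at transmission. I would argue directly on the Harris graphical construction: a crossing of $\ell_{Z_{n+1}}$ corresponds to an open space-time path using one of the $\leq K$ bottleneck edges, so the crossing probability is bounded by $K$ times the expected number of infected space-time points adjacent to the bottleneck within the restricted process on $B_{\leq n}$. Controlling this expectation calls on~\ref{A:sparse}, in the form of the edge-density identity~\eqref{eq:edgedensity}: the $L^1$-bound $\bfE[\mathsf{deg}_{\G}(0)]<\infty$ allows one to dominate the combinatorial enumeration of infection paths by an annealed quantity of total rate $O(\lambda\Delta_\G)$ per time unit near the bottleneck, so that the crossing probability can be made small by first picking $\varepsilon$ (hence $K$) and then $\lambda$ sufficiently small. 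This ordering of quantifiers, which is possible only because $K$ does not grow with $\lambda$, yields $\lambda_{\mathsf{c}}(\G)>0$ and thereby the theorem.
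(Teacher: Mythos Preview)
Your setup through the $K$-cut-point decomposition is fine and matches the paper's, but the ``uniform crossing estimate'' is a genuine gap, not merely a missing detail. You ask for a single $p<1/2$ with
\[
\P_\lambda(\tau_{n+1}<\infty\mid\mathcal{F}_{\tau_n})\leq p\quad\text{uniformly in }n\text{ and in the environment,}
\]
but this is simply false under~\ref{A:sparse}: the block $B_{n+1}$ sitting between the two bottlenecks may contain arbitrarily many edges (the degree distribution is unbounded), and on such blocks the infection survives for a time that is at least exponential in the local edge count; during that time the $\leq K$ bottleneck edges receive exponentially many transmission attempts, driving the quenched crossing probability as close to~$1$ as one likes. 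No choice of $\lambda$ repairs this, because you choose $\lambda$ \emph{before} seeing the block, and for any fixed $\lambda$ a positive $\bfP$-fraction of blocks is ``bad''. The annealed patch you sketch (``total rate $O(\lambda\Delta_\G)$ per time unit near the bottleneck'') conflates the annealed edge density with a quenched occupation-time bound; the expected occupation time of the bottleneck endpoint inside $B_{\leq n+1}$ is not uniformly bounded, so multiplying by $K\lambda$ does not produce a number below~$1$. Moreover, your $R_t=\sup_{s\leq t}\max\xi_s$ is non-decreasing, so $N_t$ cannot be dominated by a walk with leftward drift; at best you are arguing $\P(\tau_n<\infty)\leq p^n$, which still requires the uniform $p$ you do not have.

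The paper avoids exactly this trap by \emph{not} seeking a uniform bound. It first dominates $\xi^\dagger$ by an auxiliary process $\eta$ in which, at every update of the rightmost particle, the entire interval to its left is reset to ``fully infected''; this forces the block index of $X_t=\max\eta_t$ to be a genuine nearest-neighbour random walk in the random environment $(\omega_k(\lambda))_{k\geq 1}$ determined by the individual blocks. The crossing probabilities $\omega_k(\lambda)$ are \emph{not} uniformly bounded away from~$1$; instead one proves the explicit lower bound
\[
1-\omega_k(\lambda)\;\geq\;\exp\big(-|C_k|-2\lambda|E(\mathcal{C}_k)|\big),
\]
which is just the chance that the whole block recovers before an $\operatorname{Exp}(1)$ clock on the cut edge rings. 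Kac's lemma and the ergodic theorem give $\overline{\bfE}|C_1|<\infty$ and $\overline{\bfE}|E(\mathcal{C}_1)|<\infty$, so $\overline{\bfE}\log(1-\omega_1(\lambda))$ is finite, while $\overline{\bfE}\log(1/\omega_1(\lambda))\to\infty$ as $\lambda\downarrow 0$ by monotone convergence. Ledrappier's recurrence criterion for RWRE in a stationary ergodic environment then yields recurrence of the block walk, hence extinction. The missing idea in your argument is precisely this replacement of a uniform bound by an integrated logarithmic criterion.
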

\begin{remark}\label{rem:schul}
\begin{enumerate}[(i)]
    \item Note that Assumptions~\ref{A:stat} and~\ref{A:sparse} combined imply that finiteness of $\mathsf{e}$ is a zero-one event. Indeed, using sparsity, \(\bfP(\mathsf{e}=\infty)=\bfP(\mathsf{e}(z)=\infty\text{ for all }z\in \Z)\) and since the event \(\{\mathsf{e}(z)=\infty\) for all \(z\in \Z\}\) is translation invariant, under ergodicity, \(\bfP(\mathsf{e}=\infty)\in\{0,1\}\). Furthermore, observe that the finiteness of \(\mathsf{e}\) assumption generalises Schulman's classical condition for the absence of percolation in one-dimensional long-range percolation models~\cite{schulman_long_1983}. Consequently, the `typical' random graph models that fall under the scope of Theorem~\ref{thm:exsubphase} do not have infinite clusters and adding edges is necessary to obtain an infinite environment for the contact process.
    \item The condition $\mathbf{P}(\mathsf{e}<\infty)=1$ implies that long-range edges in $\G$ are so few, that $\G$ remains essentially $1$-dimensional. In particular, $\mathbf{P}(\mathsf{e}<\infty)=1$ implies $p_\mathsf{c}(\G)=1$, and hence the percolation phase transition on $\G$ is trivial. The converse is not true: take $\G$ to be scale-invariant long-range percolation at the critical point \cite{AizenmanNewman86}, then clearly $p_\mathsf{c}(\G)=1$ but $\mathbf{P}(\mathsf{e}=\infty)=1$, since $\mathsf{e}$ is a sum of independent Bernoulli random variables with $\bfE[\mathsf e]=\infty$. 
\end{enumerate}
 
\end{remark}

%%%%%%%%%%%%%%%%%%%%%%%%%%%%%%%%%%%
%%%% Examples / concrete models %%%
%%%%%%%%%%%%%%%%%%%%%%%%%%%%%%%%%%%

\subsection{Extinction phases for specific graph models}\label{sec:examples}
We now apply our main result to a variety of graph models from the literature, namely, long-range percolation, augmented Gilbert graphs as well as weight-dependent random connection models. 
\subsubsection{Long-range percolation} We begin by improving Can's result for long-range percolation models, thereby verifying his conjecture from \cite{can_cp_lrp}.
\begin{theorem}[Extinction in long-range percolation] \label{thm:LRP}
Let $\G$ denote the graph obtained by performing independent Bernoulli long-range percolation on $\Z$ with connection function $\varphi\colon \N\to [0,1]$ satisfying
\[
\sum_{k=1}^\infty k \varphi(k)<\infty\quad \text{ and }\quad\{k\colon  \varphi(k)=1\}\neq \emptyset.
\]
Then, $\lambda_{\mathsf{c}}(\G)\equiv \lambda_{\mathsf{c}}>0$ $\bfP$-almost surely. In particular, we have that
\(\delta^\ast\leq 2\).
\end{theorem}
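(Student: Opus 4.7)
The plan is to recognise that Theorem~\ref{thm:LRP} is an almost immediate corollary of Theorem~\ref{thm:exsubphase}: I only need to verify that Bernoulli long-range percolation under the stated hypotheses on $\varphi$ satisfies Assumptions~\ref{A:stat} and~\ref{A:sparse} together with $\bfP(\mathsf{e}<\infty)=1$, and then convert the conclusion $\lambda_\mathsf{c}>0$ into the bound $\delta^\ast \leq 2$ in the scale-invariant regime.

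First I would check stationarity and ergodicity (\ref{A:stat}): the independent Bernoulli construction of $\G$ is clearly invariant under integer shifts, and ergodicity follows because the edge variables $(\1\{\{x,y\}\in E\})_{\{x,y\}\in\Z^{[2]}}$ are independent, so Kolmogorov's zero-one law (applied to shift-invariant events, which are measurable with respect to the tail $\sigma$-algebra of a suitable enumeration) implies triviality of the invariant $\sigma$-algebra. Since $\{k\colon \varphi(k)=1\}\neq\emptyset$ there are deterministic edges; by the remark following Theorem~\ref{thm:exsubphase} I may further augment $\G$ by the nearest-neighbour edges without changing its law's shift-invariance, ensuring connectedness.

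Next I would verify sparsity (\ref{A:sparse}) and the cut-set condition by a direct moment calculation. For the root degree,
\[
\Delta_\G = \bfE\big[\mathsf{deg}_{\G}(0)\big] = 2\sum_{k=1}^\infty \varphi(k)\leq 2\sum_{k=1}^\infty k\varphi(k) < \infty.
\]
For $\mathsf{e}$, the edges above $\ell_0$ are indexed by pairs $(x,y)$ with $x\leq -1$, $y\geq 0$, and writing $a=-x\geq 1$, $b=y\geq 0$ and $k=a+b$, a reindexing gives
\[
\bfE[\mathsf{e}] = \sum_{a=1}^\infty\sum_{b=0}^\infty \varphi(a+b) = \sum_{k=1}^\infty k\,\varphi(k) < \infty,
\]
the last equality because each $k\geq 1$ is hit by exactly $k$ pairs $(a,b)$. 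Since $\mathsf{e}$ is a sum of independent Bernoulli variables with finite expectation, Borel--Cantelli yields $\mathsf{e}<\infty$ almost surely. Theorem~\ref{thm:exsubphase} then applies and gives $\lambda_\mathsf{c}(\G)>0$ $\bfP$-almost surely.

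Finally I would specialise to $\varphi(k)=k^{-\delta}$ with $\delta>2$: then $\sum_{k\geq 1}k^{1-\delta}<\infty$ and $\varphi(1)=1$, so both hypotheses hold and every such $\delta$ lies in the set defining $\delta^\ast$; hence $\delta^\ast\leq 2$, verifying Can's conjecture. Overall there is no real obstacle here once Theorem~\ref{thm:exsubphase} is available; the only point requiring care is the moment identity $\bfE[\mathsf{e}]=\sum_{k\geq 1}k\varphi(k)$, which shows why the moment condition $\sum k\varphi(k)<\infty$ is exactly the right hypothesis to trigger the main theorem.
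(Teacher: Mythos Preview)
Your proof is correct and follows essentially the same approach as the paper: verify \ref{A:stat} from shift-invariance and independence of the edge variables, verify \ref{A:sparse} via $\Delta_\G=2\sum_k\varphi(k)<\infty$, and check $\mathsf{e}<\infty$ almost surely by computing $\bfE[\mathsf{e}]=\sum_{k\geq 1}k\varphi(k)<\infty$, then invoke Theorem~\ref{thm:exsubphase}. Your version is in fact slightly more explicit than the paper's, spelling out the reindexing that gives the moment identity and the specialisation to $\varphi(k)=k^{-\delta}$ for the bound $\delta^\ast\leq 2$.
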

In fact, we believe that this result is still not sharp,
the conclusion of Theorem~\ref{thm:LRP} should remain valid whenever
\[
\sum_{k=1}^\infty \varphi(k)<\infty.
\]
In particular this would imply $\delta^\ast=1.$
The intuition behind this prediction comes from the fact that sparse i.i.d.\ Bernoulli percolation graphs have a degree distribution with all exponential moments. We know from the result of Bhamidi et al.~\cite{bhamidi_survival_2021} that this is a sufficient condition for the non-triviality of the survival-extinction phase transition in Galton--Watson trees. As the latter can be viewed as spread-out limits of long-range percolation clusters, see e.g.~\cite{Bollo_spread_07}, it is hard to conceive how imposing the additional restriction of a low-dimensional ambient geometry could help the contact process survive. 

Shortly after we prepared the first version of the present article, an alternative proof of Theorem~\ref{thm:LRP} using a Peierls-type argument was given by Gomes et al.\ \cite{gomes2025extinctioncontactprocessonedimensional}.

\subsubsection{Augmented Gilbert graphs} The next model we discuss is a variant of the Gilbert graph associated with Boolean percolation models~\cite{hall_continuum_1985,gouere_subcritical_2008}. Let $X=\{\dots,X_{-1},X_0=0,X_1,\dots\}$ denote a stationary and ergodic point process on $\R$ under its Palm-distribution, i.e., with a point $X_0=0$ at the origin. We assume that $X$ is indexed according to the natural order on $\R$. To each $X_j\in X$, we assign a random radius $R_j$ drawn from a given distribution $\rho$ on $(0,\infty)$. Note that the radii are not assumed to be independent, it suffices that the joint process of points and radii be stationary and ergodic under additive shifts in $\R$. The graph $\G$ is then obtained by setting $V(\G)=\Z$ and
\[
\{i,j\}\in E(\G) \quad\Longleftrightarrow\quad |X_i-X_j|\leq R_i+R_j \text{ or }|i-j|=1,
\]
which is equivalent to taking the Gilbert graph associated with the Boolean model induced by the radii, adding edges between consecutive vertices and identifying the vertices with their index, while keeping all edges. We call this model the \emph{augmented Gilbert graph}.
\begin{theorem}[Extinction in augmented Gilbert graphs] \label{thm:DBP}
Let $(X_i,R_i)_{i\in\Z}\subset \R\times[0,\infty)$ denote a stationary and ergodic sequence such that $X=(X_i)_{i\in\Z}$ forms a simple point process and such that the law $\rho$ of $R_0$ satisfies
\begin{equation}
	\begin{aligned}\label{eq:vertexmarks}
		\int_0^{\infty}r\rho(\d r)<\infty.
	\end{aligned}
\end{equation}
Let furthermore $\G$ denote the corresponding augmented Gilbert graph. Then, $\lambda_{\mathsf{c}}(\G)\equiv \lambda_{\mathsf{c}}>0$ $\bfP$-almost surely, and thus the contact process on $\G$ has a subcritical phase.
\end{theorem}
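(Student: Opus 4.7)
The strategy is to verify the three hypotheses of Theorem~\ref{thm:exsubphase} for the augmented Gilbert graph $\G$. Assumption~\ref{A:stat} is inherited at once, since $\G$ is a shift-equivariant functional of the jointly stationary and ergodic sequence $(X_i,R_i)_{i\in\Z}$. For Assumption~\ref{A:sparse}, I would compute $\mathsf{deg}_\G(0)$ using that the root is the Palm point $X_0=0$: its Gilbert-neighbours are indexed by $\{j\neq 0\colon |X_j|\leq R_0+R_j\}$, and the elementary bound $R_0+R_j\leq 2\max(R_0,R_j)$ reduces the expected degree to a sum of two terms of the form $\bfE\,\#\{j\neq 0\colon |X_j|\leq 2R_0\}$ and $\bfE\,\#\{j\neq 0\colon |X_j|\leq 2R_j\}$. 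A Campbell--Mecke computation for the stationary marked point process shows that both are of order $\bfE[R_0]$, hence finite by~\eqref{eq:vertexmarks}; the two nearest-neighbour edges from the augmentation contribute an extra $2$.

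The heart of the argument is to show $\mathsf{e}<\infty$ $\bfP$-almost surely. Here I would exploit the following geometric fact: since the vertices of $\G$ are indexed in natural order on $\R$ with $X_0=0$, any Gilbert edge $\{i,j\}$ contributing to $\mathsf{e}$ satisfies $X_i<0\leq X_j$ together with $X_j-X_i\leq R_i+R_j$, and at least one of $|X_i|\leq R_i$ or $X_j\leq R_j$ must hold -- otherwise $X_j-X_i>R_i+R_j$, contradicting the Gilbert condition. Hence every such edge has an endpoint $k\in\{i,j\}$ whose ball $B(X_k,R_k)$ covers the origin. Writing $\mathcal{N}:=\{k\in\Z\colon |X_k|\leq R_k\}$, I thus obtain
\[
\mathsf{e}\leq\sum_{k\in\mathcal{N}}\mathsf{deg}_\G(k).
\]
A further Campbell computation together with $\int r\,\rho(\d r)<\infty$ gives $\bfE|\mathcal{N}|<\infty$, so $\mathcal{N}$ is $\bfP$-almost surely finite. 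Since Assumption~\ref{A:sparse} and the countability of $\Z$ imply that every vertex has a.s.\ finite degree, the displayed inequality yields $\mathsf{e}<\infty$ a.s., and Theorem~\ref{thm:exsubphase} concludes the proof.

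The main technical subtlety should be the careful application of Campbell--Mecke formulae in the general setting where the marks $R_i$ may be correlated with one another and with the underlying point pattern (only joint stationarity and ergodicity are assumed, not independence of marks and positions). One needs to verify that the required expectations are driven purely by the first mark moment $\bfE[R_0]$, without recourse to higher-order moment or independence assumptions. All other steps are elementary consequences of the Gilbert geometry on the line.
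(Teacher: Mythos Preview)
Your overall strategy---verifying the hypotheses of Theorem~\ref{thm:exsubphase}---coincides with the paper's, but your route to the finiteness of $\mathsf{e}$ is different and more self-contained. The paper does not bound $\mathsf{e}$ directly; it instead invokes the classical continuum-percolation fact that the spatial origin is left uncovered by the balls of the Boolean model with positive probability precisely when $\int r\,\rho(\d r)<\infty$, and from this deduces a positive density of $K$-cut points. It also carries a case distinction according to whether $\inf\textrm{supp}(\rho)=0$, which your argument avoids entirely. Your geometric observation---that every Gilbert edge straddling $\ell_0$ must have an endpoint $k$ with $|X_k|\le R_k$---isolates the same phenomenon without the external citation and without the case split, and yields the cleaner bound $\mathsf{e}\le\sum_{k\in\mathcal{N}}\mathsf{deg}_\G(k)$.

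Your caution about the Campbell--Mecke step is justified, and this is where the sketch should be amended. In the fully dependent setting neither $\bfE|\mathcal{N}|<\infty$ nor $\bfE[\mathsf{deg}_\G(0)]<\infty$ follows from $\bfE[R_0]<\infty$ alone: consider a heavily clustered point process (say of Neyman--Scott type with cluster sizes of infinite second moment) equipped with constant radii, for which the Palm-expected number of nearby points is infinite. Fortunately, for your $\mathsf{e}<\infty$ argument only the \emph{almost-sure} finiteness of $|\mathcal{N}|$ and of each $\mathsf{deg}_\G(k)$ is needed, and this follows from the pointwise ergodic theorem rather than Campbell: stationarity together with $\bfE[R_0]<\infty$ gives $R_k/|k|\to 0$, while $|X_k|/|k|$ converges to the positive reciprocal intensity, so only finitely many $k$ can satisfy $|X_k|\le R_k$ (respectively $|X_k|\le R_0+R_k$). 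The verification of~\ref{A:sparse}, i.e.\ of $\bfE[\mathsf{deg}_\G(0)]<\infty$, is more delicate and does not obviously reduce to the first radius moment in the fully correlated case; the paper's own proof is likewise silent on this point.
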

Let us note that the requirement that $R_0$ be integrable cannot be weakened since otherwise the underlying Graph model will in general not be locally finite. In that sense, we believe that the condition in Theorem~\ref{thm:DBP} is indeed sharp and for classical Gilbert graphs derived from i.i.d.\ radii the existence of an extinction phase for the contact process on the augmented graph coincides with the existence of a connectivity phase transition of the unaugmented graph with respect to the intensity of the underlying point process.
\begin{theorem}[Survival and extinction in augmented Gilbert graphs with i.i.d.\ radii] \label{thm:DBP2}
Let $X=(X_i)_{i\in\Z}\subset \R\times[0,\infty)$ denote a stationary and ergodic simple point process endowed with i.i.d.\ marks $(R_i)_{i\in\Z}$ drawn from a distribution $\rho$ on $[0,\infty)$ %that satisfies~\eqref{eq:vertexmarks}, 
independently of $X$. Then, the contact process on the induced augmented Gilbert graph $\G$ has a subcritical phase $\bfP$-almost surely if and only if
\[
	\int_0^{\infty}r\rho(\d r)<\infty.
\]
\end{theorem}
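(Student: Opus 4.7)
\emph{Plan.} The statement is a biconditional and I address the two implications separately. For the sufficient direction, when $\int r\,\rho(\d r)<\infty$, the i.i.d.\ marks $(R_i)_{i\in\Z}$ drawn independently of $X$ give rise to a joint process $(X_i,R_i)_{i\in\Z}$ that is stationary and ergodic under the re-centred index shift: stationarity is inherited from the Palm setup, and ergodicity follows because an i.i.d.\ marking is mixing and the product of an ergodic system with a mixing one remains ergodic. All hypotheses of Theorem~\ref{thm:DBP} are then satisfied, which directly yields $\lambda_{\mathsf c}(\G)>0$ $\bfP$-almost surely.

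For the necessary direction, assume $\int r\,\rho(\d r)=\infty$. In view of the footnote in Section~1.2, it suffices to show that $\G$ is $\bfP$-almost surely not locally finite: connectedness is automatic because $\G\supset\Z_{\textup{nn}}$. I would show that already the root $0$ has infinite degree. Writing $\bar F_\rho(x):=\rho((x,\infty))$, observe that
\[
\mathsf{deg}_{\G}(0)\geq N:=\sum_{j\in\Z\setminus\{0\}}\1\bigl\{R_j\geq (|X_j|-R_0)_+\bigr\},
\]
and, conditionally on $X$ and $R_0$, the summands are independent Bernoulli variables with success probabilities $p_j:=\bar F_\rho((|X_j|-R_0)_+)$, thanks to the i.i.d.\ assumption and the independence of $(R_j)$ from $X$.

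By the pointwise ergodic theorem for stationary point processes, $|X_j|/|j|\to 1/\mu$ $\bfP$-almost surely as $|j|\to\infty$, where $\mu\in(0,\infty)$ is the intensity of $X$ (positive by the Palm assumption, finite because $X$ is simple and therefore locally finite). Hence, almost surely, $p_j\geq \bar F_\rho(2|j|/\mu)$ for all $|j|$ sufficiently large, so that $\sum_j p_j$ is bounded below by a constant multiple of $\int_{c}^{\infty}\bar F_\rho(x)\,\d x$ for some $\bfP$-a.s.\ finite random $c>0$. Since $\mathbb{E}[R_0]=\int_{0}^{\infty}\bar F_\rho(x)\,\d x=\infty$ by assumption, this lower bound is infinite almost surely, and the conditional second Borel--Cantelli lemma for independent events then forces $N=\infty$ almost surely. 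Consequently $\mathsf{deg}_{\G}(0)=\infty$, $\G$ is not locally finite, and the footnote argument yields $\lambda_{\mathsf c}(\G)=0$ $\bfP$-almost surely.

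The main obstacle is less a deep step than careful bookkeeping in the Palm framework: one should verify that i.i.d.\ marking truly preserves ergodicity of $(X_i,R_i)_{i\in\Z}$ under the index shift, and promote the conditional-on-$(X,R_0)$ a.s.\ divergence of $\sum_j p_j$ to an unconditional a.s.\ statement about $N$. Both are standard manipulations but deserve to be spelled out.
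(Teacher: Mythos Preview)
Your proposal is correct and follows the same two-part structure as the paper: the ``if'' direction is reduced to Theorem~\ref{thm:DBP}, and the ``only if'' direction proceeds by showing that $\int r\,\rho(\d r)=\infty$ forces $\mathsf{deg}_\G(0)=\infty$ almost surely, whence $\lambda_\mathsf{c}(\G)=0$. The only difference is that the paper outsources the infinite-degree claim to a citation of~\cite{meester_continuum_1996}, whereas you spell out a direct Borel--Cantelli argument using the ergodic spacing of $X$ and the independence of the radii.
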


\subsubsection{Augmented weight-dependent random connection models}\label{sec_WDRCM}
Finally, let us extend the application of our main theorem to \emph{weight-dependent random connection models} (WDRCMs), introduced in~\cite{gracar_recurrence_2022}, which generalises both previous examples. We use the framework of~\cite{jacob2024annulus} and assume \(X=(X_i)_{i\in\Z}\) to be either a stationary and ergodic simple point process under its Palm distribution, or the lattice \(\Z\). Next, let \(U=(U_i)_{i\in\Z}\) be a sequence of independent random variables distributed uniformly on \((0,1)\), and define \(\X=((X_i,U_i))_{i\in\Z}\). Note that \(\X\) is an independently marked stationary and ergodic simple point process under its Palm distribution. We build an auxiliary graph \(\calG\) on the points of \(\X\) from which we will derive the graph \(\G\). To this end, let \(\varphi\colon (0,1)\times(0,1)\times(0,\infty)\to [0,1]\) be a \emph{connection function} that is symmetric in the first two arguments and non-increasing in all three arguments. Now, given \(\X\), we connect any pair of vertices \(\X_i,\X_j\) in \(\calG\) independently with probability
\begin{equation} \label{eq:WDRCM}
	\bfP\big(\{\X_i,\X_j\}\in E(\calG) \, \big| \, (X,U)=(x,u)\big) = \varphi(u_i,u_j,|x_i-x_j|).	
\end{equation}

Note that the non-increasing property of the marks arguments imply that vertices with a mark closer to zero have a higher probability of being connected. Hence, the mark models the \emph{inverse weight} of a vertex, giving the model its name. Similarly, short edges are more probable than long edges. The augmented graph \(\G\) is now obtained from \(\calG\) by setting
\[
	V(\G)=\Z \quad \text{ and } \quad E(\G) = \big\{\{i,i+1\}\colon i\in\Z\big\}\cup \big\{\{i,j\}\colon \{\X_i,\X_j\}\in E(\calG), i\neq j\in\Z\big\}.  
\]
It is clear that \(\G\) satisfies the Conditions~\ref{A:stat} and~\ref{A:sparse} if and only if \(\calG\) does. Furthermore, the existence of cut points, i.e., the finiteness of $\mathsf{e}$, 
in \(\G\) is determined by the edge set of \(\calG\), and thus by the properties of \(\varphi\) in particular.

\begin{theorem}[Extinction in augmented weight-dependent random connection models] \label{thm:WDRCM}
	Let \(\G\) be the augmented graph of a WDRCM, constructed on the points of the Palm version of a stationary and ergodic simple point process \(\X\) with connection function \(\varphi\). Suppose that there exists \(\mu>0\) such that
	\begin{equation} \label{eq:WDRCMcut}
	  	\sum_{n\in\N} 2^{2n} \int\limits_{2^{-n-\mu n}}^1 \int\limits_{2^{-n-\mu n}}^1 \varphi(u,v,2^n) \, \d u \, \d v <\infty,
	\end{equation}
 	then \(\lambda_c(\G)=\lambda_c>0\) \(\bfP\)-almost surely and thus, the contact process on \(\G\) has a subcritical phase. 
\end{theorem}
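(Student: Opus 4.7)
The plan is to apply Theorem~\ref{thm:exsubphase} to the augmented graph $\G$. Stationarity and ergodicity (\ref{A:stat}) are built into the construction: $\X=((X_i,U_i))_{i\in\Z}$ is a stationary ergodic marked point process under its Palm distribution, and the edges of $\calG$ are drawn conditionally independently given $\X$, so the integer-indexed augmented graph $\G$ inherits stationarity and ergodicity under index shifts. It thus remains to verify the sparsity condition \ref{A:sparse} and the cut-point condition $\bfP(\mathsf{e}<\infty)=1$. I would establish both through a dyadic decomposition at the spatial scale of the edges, combined with a truncation of the mark values at the threshold $2^{-n-\mu n}$ suggested by \eqref{eq:WDRCMcut}.

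For \ref{A:sparse}, by monotonicity of $\varphi$ in the distance argument combined with Campbell's formula and the stationarity of $\X$, the expected root degree in $\calG$ admits a dyadic bound of the form
\[
\bfE[\mathsf{deg}_{\calG}(0)]\leq C\sum_{n\geq 0}2^n\int_0^1\int_0^1\varphi(u,v,2^n)\,\d u\,\d v,
\]
with $C$ depending on the intensity of $\X$. Splitting the $[0,1]^2$-integral into $[2^{-n-\mu n},1]^2$ and its $L$-shaped complement of area at most $2\cdot 2^{-n-\mu n}$, the complementary contribution is bounded by $2\sum_n 2^{-\mu n}<\infty$ (using $\varphi\le 1$), while the main contribution is dominated by $\sum_n 2^{-n}\cdot 2^{2n}\int_{2^{-n-\mu n}}^1\int_{2^{-n-\mu n}}^1\varphi(u,v,2^n)\,\d u\,\d v$, itself controlled by the series in \eqref{eq:WDRCMcut}. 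This gives \ref{A:sparse}, and in particular $\G$ is $\bfP$-almost surely locally finite.

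For the cut-point condition, decompose the edges above $\ell_0$ by their dyadic length: for $n\geq 0$ let $E_n$ count the $\calG$-edges $\{X_i,X_j\}$ with $X_i<0\leq X_j$ and $|X_i-X_j|\in[2^n,2^{n+1})$, so that $\mathsf{e}\leq C_0+\sum_{n\geq 0}E_n$ where $C_0$ is a $\bfP$-almost surely finite contribution from edges of length below $1$ (guaranteed by local finiteness). Split further $E_n=E_n^{\mathrm{g}}+E_n^{\mathrm{b}}$ depending on whether both endpoint marks exceed $2^{-n-\mu n}$ or not. Using Campbell's formula, the fact that the expected number of ordered pairs of $\X$-points with one point on each side of the origin and separated by a distance of order $2^n$ is $O(2^{2n})$, together with monotonicity of $\varphi$, one obtains
\[
\bfE[E_n^{\mathrm{g}}]\leq C\cdot 2^{2n}\int_{2^{-n-\mu n}}^1\int_{2^{-n-\mu n}}^1\varphi(u,v,2^n)\,\d u\,\d v,
\]
which is summable in $n$ by \eqref{eq:WDRCMcut}; hence $\sum_n E_n^{\mathrm{g}}<\infty$ $\bfP$-almost surely. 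For the bad part, the expected number of $\X$-points in $[-2^{n+1},2^{n+1}]$ with mark below $2^{-n-\mu n}$ is of order $2^{-\mu n}$, which is summable, so Borel--Cantelli yields that $\bfP$-almost surely no such low-mark point exists for all sufficiently large $n$, and thus $E_n^{\mathrm{b}}=0$ eventually. Local finiteness then guarantees that the finitely many remaining $E_n^{\mathrm{b}}$ are each finite, so $\mathsf{e}<\infty$ $\bfP$-almost surely.

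The main technical obstacle is to run the Campbell-type computations without the explicit Palm factorisation available for a Poisson process, for a general stationary ergodic simple point process. Invoking the refined Campbell theorem for stationary marked processes, together with the pointwise ergodic theorem to control the number of $\X$-points in dyadic windows uniformly in $n$, should deliver the required estimates. Once \ref{A:stat}, \ref{A:sparse} and $\bfP(\mathsf{e}<\infty)=1$ are verified, Theorem~\ref{thm:exsubphase} applies directly and yields $\lambda_\mathsf{c}(\G)\in(0,\infty)$ $\bfP$-almost surely.
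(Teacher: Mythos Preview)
Your proposal is correct and follows essentially the same route as the paper: verify \ref{A:stat}, \ref{A:sparse}, and the finiteness of $\mathsf{e}$, then invoke Theorem~\ref{thm:exsubphase}, using the same dyadic decomposition in the distance and the same mark truncation at level $2^{-n-\mu n}$. The only cosmetic difference is that the paper dispatches the cut-point part by a reference to \cite[Section~4]{gracar2023finiteness} (whose content is the argument you spell out), while you prove $\bfP(\mathsf{e}<\infty)=1$ directly via Borel--Cantelli on the low-mark vertices and summability of the good-part expectations.
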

Let us mention one example, for which Theorem~\ref{thm:WDRCM} applies, namely
\[
    \varphi(u_i,u_j,|x_i-x_j|) = \1\{|x_i-x_j|\leq u_i^{-\gamma} u_j^{-\gamma}\}.
\]
For this choice,~\eqref{eq:WDRCMcut} is fulfilled if and only if \(\gamma<1/2\). Before augmenting this graph, it serves as the weak local limit of hyperbolic random graphs after appropriate change of variables~\cite{komjathy_explosion_2020}. The contact process on hyperbolic random graphs, corresponding to \(\gamma>1/2\) has been extensively studied in~\cite{linker_contact_2021}, where it is shown that, in the limiting graph, there is no extinction phase. While for \(\gamma>1/2\) there also is no subcritical percolation phase, for $\gamma=1/2$, such a percolation phase transition exists~\cite{bode_largest_2015,gracar_percolation_2021}. It would be interesting to further investigate the contact process at this regime boundary, in particular because the absence of a survival/extinction phase transition there would imply the same for the regime boundary in the one-dimensional version of the model studied in~\cite{gracar2024contact}.

\subsection{Potential extensions to higher dimensions.}\label{sec:extensionsdisc}
Property~\eqref{eq:WDRCMcut}, which simultaneously guarantees sparseness~\ref{A:sparse} and the existence of cut points embodies a general theme of the study in the WDRCMs in general, namely that the presence of many long edges drastically changes the model's properties, see also Remark~\ref{rem:schul}. As elaborated in~\cite{gracar2023finiteness}, the \(n\)-th summand in Property~\eqref{eq:WDRCMcut} essentially determines the expected number of long edges directly connecting the sets \(\{\X_{-2^{n}},\dots,\X_{-2^{n-1}}\}\) and \(\{\X_{2^{n-1}},\dots,\X_{2^n}\}\). As the event of \(\X_0\) being a (\(1\)-)cut point is equivalent to the absence of any such edge, summability ensures that \(\X_0\) is a cut point with positive probability, hence implying finiteness of \(\mathsf e\). 
It is important to note that truncating the lower integral bound in~\eqref{eq:WDRCMcut}  is crucial to exclude atypically powerful vertices that would dominate the expected number of long edges without actually being present. Specifically, the expected number of vertices in the two considered sets, that have mark smaller than \(2^{-n-n\mu}\), is \(2^{-2\mu n}\), which again is summable. 

Although the relationship between long edges and cut points is inherently one-dimensional, the concept of quantifying long edges in a graph via the tail of an integral, as in~\eqref{eq:WDRCMcut}, can be generalised to higher dimensions. For the existence of a subcritical percolation phase, this strategy has proven successful in the recent work~\cite{jacob2024annulus}. The geometric arguments in that paper are based on crossing probabilities for annuli of the form \(\{x\in\R^d\colon n<|x|\leq 2n\}\). This is a natural extension of the cut-point concept: Consider an edge \emph{long} if it connects a vertex within the inner ball \(\{|x|\leq n\}\) to some vertex located outside of the annulus. In~\cite{jacob2024annulus}, it is established that, if long edges asymptotically vanish, then annulus crossings characterise the existence of a percolation phase transition. More precisely, by optimising an integral formula similar to~\eqref{eq:WDRCMcut}, the authors exactly quantify the occurrence of long edges and establish two regimes: Either long edges appear frequently, allowing large annuli to be crossed by a single edge, or long edges are rare, in which case large annuli can only be traversed via relatively long paths. The latter scenario can always be suppressed by significantly decreasing the underlying intensity parameter, ultimately preventing large annuli from being traversed. Therefore, a subcritical percolation phase exists. Similar geometric restrictions apply to the spread of the contact process, which makes it very plausible that the absence of long edges is sufficient for the existence of a subcritical phase also in WDRCMs in higher dimension.

This is further corroborated by another important consequence of the scarcity of long edges, namely the comparability of graph distances with Euclidean distances. In~\cite{luechtrath2024chemical}, it is shown that, if long edges are rare, then the graph distance between any two vertices at a large distance is bounded from below by a multiple of their Euclidean distance. The result holds for \emph{any} sparse translation-invariant graph in which long edges are rare. This behaviour contrasts sharply with the non-spatial case, where graph distances typically grow no faster than logarithmically in the system size, see~\cite{Bollobook,ChungLuDist03,RemcoDist05}.

On an intuitive level, this phenomenon may imply the existence of an extinction phase for graphs with power-law degree distributions in the presence of sufficiently strong geometric constraints. More precisely, although high-degree vertices, on which the contact process survives locally for a long time, exist, these hubs should be too far apart to apply the ``chain of stars'' strategy, which guarantees supercriticality on heavy-tailed trees in~\cite{bhamidi_survival_2021}. 
However, the higher the dimension, the less geometric restrictions are felt by the graph. Let us explain this effect, by way of example, for the Boolean model in dimension \(d\geq 2\)  as a special instance of a WDRCM. For technical simplicity, we construct the model directly on the lattice \(\Z^d\). More precisely, the vertex set is given by the lattice sites \(z\in\Z^d\), where each site is independently assigned a uniform mark \(u_z\). The connection function~\eqref{eq:WDRCM} in this case reads
\[
	\varphi(u_z,u_v,|z-v|) =\1\{|z-v|\le u_z^{-\gamma/d}+u_v^{-\gamma/d}\}, \quad \gamma\in(0,1).
\]
This is essentially the augmented Boolean model above with Pareto\((d/\gamma)\)-distributed radii but directly constructed on \(\Z^d\). In particular,
\begin{enumerate}[(i)]
	\item all nearest-neighbour edges are present.
	\item For two disjoint subsets \(B_1,B_2\subset \Z^d\), the induced subgraphs \(\G\cap B_1\) and \(\G\cap B_2\) are independent.
	\item The tail of the degree distribution is determined by the volume of associated balls of large radii and thus follows the power-law with exponent \(\tau=1+1/\gamma\)~\cite{gracar_transience_2020,gracar_age-dependent_2019}, i.e., 
    \[
        \bfP(\deg(o,U_o)=k)\sim k^{-\tau+o(1)}, \quad \text{ as } k\to\infty.
    \]
\end{enumerate} 
Consider now the subgraph induced by a large box of volume \(L\). By Observation~(iii), the largest degree in that box is roughly of order \(L^{1/(\tau-1)}=L^\gamma\). By~(i) and~\cite{luechtrath2024chemical}, the graph distance of the highest-degree node to the boundary of the box is of the order of the side length \(L^{1/d}\). Note that the degree does not depend on the dimension while the graph distance does. Therefore, taking two instances of the model, using the same \(\gamma\) but two different dimensions \(d_1<d_2\), both observed in a respective box of volume \(L\), the higher dimension should be beneficial for the contact process. This is due to the fact that the largest-degree vertex of the box in both cases survives for roughly the same time, which is determined by \(L^\gamma\), while distances are shorter in the second case as \(L^{1/d_1}>L^{1/d_2}\). Making this comparison between degree distribution and distances precise leads to the following theorem.

\begin{theorem}\label{thm:higherDim}
	Let \(\G\) be the Boolean model on \(\Z^d\) thus constructed with \(d\geq 2\) and \(\gamma\in(0,1)\). If \(\gamma>1/d\) (or equivalently \(\tau<d+1\)), then \(\lambda_\mathsf{c}(\G)=0\) \(\bfP\)-almost surely and the contact process on \(\G\) has no subcritical phase.
\end{theorem}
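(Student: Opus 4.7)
My plan is to adapt the ``chain of stars'' strategy of Bhamidi et al.~\cite{bhamidi_survival_2021} to the geometric lattice setting and to build a block-renormalised process along which the infection survives. By monotonicity of $\lambda_\mathsf{c}(\cdot)$ it suffices to fix an arbitrary $\lambda>0$ and to exhibit survival with positive probability, which yields $\lambda_\mathsf{c}(\G)=0$. I partition $\Z^d$ into cubes $B_z=\ell z+[0,\ell)^d$, $z\in\Z^d$, of side length $\ell$ to be chosen large in terms of $\lambda$. Observations~(i) and~(ii) of the excerpt furnish two key facts for free: adjacent cubes are linked by a nearest-neighbour path of length at most $2d\ell$, and the subgraphs of $\G$ induced on disjoint boxes are independent.

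First I would exploit the power-law degree tail from Observation~(iii). With $M:=c\,\ell^{d\gamma}$ for a small constant $c>0$, the probability that a given vertex has degree at least $M$ is of order $\ell^{-d}$, so a union-bound computation shows that $B_z$ contains a \emph{hub} of degree $\geq M$ with probability arbitrarily close to one for $c$ small and $\ell$ sufficiently large. Pick one such hub $v_z$ in each good box. The classical star estimate going back to Pemantle~\cite{PemStac01} then ensures that, once $v_z$ is infected, the contact process restricted to $v_z$ and its neighbourhood keeps $v_z$ infected throughout a time window of length $T:=\exp(c_1\lambda^2 M)$ with probability at least $1/2$.

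The heart of the argument is the transmission between neighbouring hubs. During its local survival window, $v_z$ launches on the order of $\lambda T$ independent attempts to propagate the infection along the nearest-neighbour path of length at most $2d\ell$ reaching $v_{z\pm e_i}$; a standard subcritical contact process estimate on $\Z$ lower-bounds the success probability of each individual attempt by $(c_2\lambda)^{2d\ell}$. Inter-hub transmission therefore succeeds with probability close to one as soon as $\lambda T\,(c_2\lambda)^{2d\ell}\gtrsim 1$, which after substitution becomes
\[
    \ell^{d\gamma-1}\;\gtrsim\;\frac{\log(1/\lambda)}{\lambda^2}.
\]
This is the place where the hypothesis $\gamma>1/d$ enters decisively: since the exponent $d\gamma-1$ is strictly positive, the inequality can always be met by choosing $\ell$ sufficiently large depending on $\lambda$, whereas it fails uniformly in $\ell$ when $d\gamma\leq 1$.

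Finally I would declare $B_z$ to be \emph{open} if it is good and if, within one epoch of length $T$, its hub both survives locally and successfully transmits infection to all $2d$ neighbouring hubs. The preceding steps make $\bfP(B_z\text{ open})$ as close to one as desired. Provided a suitable decoupling between distinct boxes is available, the Liggett--Schonmann--Stacey comparison lemma then shows that the open blocks dominate supercritical Bernoulli site percolation on $\Z^d$, which in turn translates into global survival of the contact process and hence $\lambda_\mathsf{c}(\G)\leq\lambda$. The main technical obstacle I anticipate is precisely this decoupling: hubs and their connecting paths in adjacent cubes may share long edges of $\G$, so that a preliminary truncation step — removing edges extending between far-apart boxes along the lines of the annulus strategy in~\cite{jacob2024annulus} — will likely be needed before Observation~(ii) can be invoked to give the required independence.
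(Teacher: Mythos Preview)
Your overall plan---coarse-grain into boxes, find a high-degree star in each, use the star lemma for exponential local survival, and push the infection to neighbouring stars along the deterministic nearest-neighbour lattice path---is exactly the paper's strategy, and your key balance $\ell^{d\gamma-1}\gg 1$ is the right reason the hypothesis $\gamma>1/d$ enters. However, the comparison you propose has a genuine gap: declaring $B_z$ ``open'' when the hub survives and transmits \emph{within one epoch of length $T$} and then comparing to \emph{spatial} Bernoulli site percolation on $\Z^d$ does not compose. If $B_z$ is open in the window $[0,T]$ and infects the hub of $B_{z'}$ at some time $\tau\in(0,T)$, you need $B_{z'}$ to sustain and forward the infection on $[\tau,\tau+T]$, not on $[0,T]$; the ``open'' events you defined live in a single time slab and give no information about later slabs. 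The standard remedy, which the paper implements, is to pass to \emph{oriented} percolation on $\Z^d_p\times\N_0$: one fixes a time grid $t_n=nT$, defines the openness of the directed edge $(v,n)\to(v\pm e_i,n+1)$ via three events in the graphical representation on $[t_n,t_{n+1}]$ (persistence on $\scrS_v$, transmission of the path, and re-ignition of $\scrS_{v\pm e_i}$), and then uses that these edge events are independent across $n$ and only $2$-dependent in space. Liggett--Schonmann--Stacey then applies to the oriented model, and an infinite oriented path genuinely encodes an infinite infection path.

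Your anticipated ``main obstacle''---long edges of $\G$ forcing a truncation in the spirit of~\cite{jacob2024annulus}---is a red herring. The construction never uses long edges: the star $\scrS_v$ consists of $o_v$ together with its neighbours \emph{inside} $B_L(v)$ (so goodness of boxes is independent by Observation~(ii), since it depends only on the radii $u_z^{-\gamma/d}$ of vertices in the box), and the transmission path uses only nearest-neighbour edges from Observation~(i), which stay in $B_L(v)\cup B_L(v\pm e_i)$. Hence the relevant events are automatically $2$-dependent in space with no truncation needed; the only decoupling step required is the passage to the space-time oriented picture above.
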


It is not very difficult to deduce that the result applies to all instances of WDRCMs that have the Properties~(i)--(iii) by adapting our proof in Section~\ref{sec:higherDim}. Furthermore, the restriction to the vertex set \(\Z^d\) ensures connectivity of the whole graph allowing to exhibit the principal statement without extra technical challenges. However, the proof suggests that the result would not change if we worked on the infinite cluster of the standard Boolean model constructed on a Poisson point process instead.

In essence, Theorem~\ref{thm:higherDim} identifies a relation between the degree distribution and the dimension that makes the existence of a subcritical phase for the contact process impossible. Note that the relation \(\gamma>1/d\) is never satisfied in \(d=1\) in the sparse regime \(\gamma<1\), which is consistent with our Theorem~\ref{thm:DBP2}. On the other hand, the relation is always satisfied for the other boundary case \(d=\infty\) for any \(\gamma>0\). If we think of the non-geometric tree case as the \(d=\infty\) boundary case, this is again consistent with the results of~\cite{chatterjee_contact_2009,bhamidi_survival_2021}. Theorem~\ref{thm:higherDim} then shows how the geometric influence on the contact process on a power-law graph becomes less restrictive as the dimension grows. 

Let us briefly discuss the implications of the above on the existence of a subcritical phase for the contact process on a WDRCM with power-law exponent \(\tau\) in any dimension \(d\geq 2\). By Theorem~\ref{thm:higherDim}, \(\tau>d+1\) is necessary for the existence of a subcritical phase. Moreover, in our heuristic, we compared the degree distribution with graph distances that were of the same order as the Euclidean distances. If the graph distances were of much smaller order, say polynomially in the logarithm of the Euclidean distance, then again the geometric restrictions should not be felt by the stars and a subcritical phase should fail to exist, just as in the non-spatial case, where typical distances are of logarithmic order or shorter. Therefore, graph distances that are comparable to Euclidean distances (which is always the case in the sparse Boolean model for which Theorem~\ref{thm:higherDim} is formulated) should also be necessary for the existence of a subcritical phase on a power-law graph. As already mentioned above, this is closely related to the absence of annulus crossings via single long edges. Using the condition of~\cite{jacob2024annulus, luechtrath2024chemical} describing the absence of these kind of long edges and combining it with our findings, ultimately leads to the following conjecture.

\begin{conjecture}
	Let \(\G\) be a supercritical WDRCM with power-law degree distribution
	\[
		\bfP(\deg(o)=k)\sim k^{-\tau+o(1)}, \quad \text{ as }k\to\infty,
	\]
	for \(\tau>d+1\), and few long edges in the sense that
	\[
		\bfP(\exists x\sim y\colon |x|<n, |x-y|>2n) \leq n^{-\zeta+o(1)}, \quad \text{ as }n\to\infty,
	\]
	for \(\zeta>0\). Then, we have \(\lambda_\mathsf{c}(\G)=\lambda_\mathsf{c}>0\) \(\bfP\)-almost surely and the contact process on \(\G\) has a subcritical phase.
\end{conjecture}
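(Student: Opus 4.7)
The plan is to lift the cut-point decomposition underlying Theorem~\ref{thm:exsubphase} from the line to a sequence of nested annuli $A_k=\{x\in\R^d\colon R_k<|x|\leq R_{k+1}\}$, with a sufficiently fast-growing sequence $(R_k)_{k\in\N}$. The few-long-edges hypothesis $\bfP(\exists x\sim y\colon |x|<n,\ |x-y|>2n)\leq n^{-\zeta+o(1)}$ is precisely the input used in \cite{jacob2024annulus} to perform such an annular decomposition: with probability rapidly tending to $1$, only $K_k$ edges of $\G$ leave $A_k$ to its inner or outer neighbourhood, and the $K_k$ can be made of sub-polynomial size in $R_k$ by tuning the $R_k$ appropriately. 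Each $A_k$ therefore plays the role of a higher-dimensional $K_k$-cut region analogous to the $K$-cut points used in one dimension.

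On top of this geometric scaffolding, the random-walk-in-random-environment coupling of Theorem~\ref{thm:exsubphase} would be replaced by a coarse-grained Markov chain on the index $k$, describing the outermost annulus reached by the infection. Its transition probabilities are controlled by two contributions: the lifetime of the infection in the bulk of $A_k$, and the probability that during this lifetime the infection uses one of the at most $K_k$ crossing edges to exit. The power-law hypothesis $\tau>d+1$ bounds the maximum degree inside $A_k$ by essentially $R_k^{d/(\tau-1)}$ with $d/(\tau-1)<1$; this, together with the Euclidean-vs.-graph distance comparison from \cite{luechtrath2024chemical} (which upgrades the few-long-edges bound to a linear lower bound on chemical distances across $A_k$), should allow a per-annulus crossing probability that is summable in $k$ once $\lambda$ is chosen small enough. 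A Borel--Cantelli argument would then conclude $\lambda_\mathsf{c}>0$ \(\bfP\)-almost surely, the matching direction $\tau>d+1$ being necessary already by Theorem~\ref{thm:higherDim}.

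The main obstacle will be the quantitative control of the \emph{local survival budget} inside each $A_k$. In one dimension a cut point has $K$ edges and the coupling to a random environment is essentially local; here the annulus may contain internal high-degree hubs whose infection lifetimes follow the star-survival phenomenon exploited in \cite{bhamidi_survival_2021} and which are only stretched-exponentially suppressed in $\lambda$, not polynomially. A workable strategy is probably a two-scale decomposition: first, condition on the positions and degrees of all vertices in $A_k$ whose degree exceeds a threshold $\theta_k$ depending on $R_k$ and $\lambda$, chosen so that below $\theta_k$ a crude branching comparison bounds the infection growth and above $\theta_k$ the hubs are so sparse that their influence can be summed; second, use a BK-type inequality to decouple the at most $K_k$ distinct crossing attempts from one another. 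Making the inductive passage from annulus $k$ to annulus $k+1$ genuinely subcritical, and aligning the threshold $\tau>d+1$ with the combined decay of hub survival times and crossing-edge counts, is the step where neither the one-dimensional random-walk-in-random-environment machinery of the present paper nor the self-avoiding crossing arguments of \cite{jacob2024annulus} transfer directly, and where a genuinely new input seems to be required.
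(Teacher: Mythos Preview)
The statement you are addressing is a \emph{Conjecture} in the paper; the authors do not prove it and explicitly present it as open. There is therefore no paper proof to compare against. What the paper does provide, in the paragraphs preceding the conjecture, is exactly the kind of heuristic you reproduce: the annulus-crossing picture from \cite{jacob2024annulus}, the linear chemical-distance bound from \cite{luechtrath2024chemical}, and the intuition that for $\tau>d+1$ hubs are too far apart for the chain-of-stars mechanism of \cite{bhamidi_survival_2021} to work. Your outline is a faithful and somewhat more detailed version of that same heuristic, not an independent argument.

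Your proposal is also, by your own admission, not a proof. The final paragraph correctly identifies the genuine obstruction: controlling the local survival budget inside an annulus when it contains sub-threshold hubs whose star-survival times are stretched-exponential in their degree. Neither the one-dimensional RWRE coupling of Theorem~\ref{thm:exsubphase} nor the self-avoiding crossing estimates of \cite{jacob2024annulus} handle the back-and-forth nature of contact-process paths through such hubs, and you do not supply the missing ingredient. In particular, the suggested ``BK-type inequality to decouple the at most $K_k$ distinct crossing attempts'' is not available for the contact process (infection paths are not disjoint and the process is not a product measure), and the two-scale decomposition is stated as a strategy rather than carried out. So the proposal should be read as a research plan consistent with the paper's own discussion, not as a resolution of the conjecture.
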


%%%%%%%%%%%%%%%%%%%%%%%%%%%
%%%%%Proofs%%%%%%%%%%%%%%%
%%%%%%%%%%%%%%%%%%%%%%%%%%
\section{Proofs}\label{sec_proofs}

\subsection{Proof of Theorem~\ref{thm:exsubphase}}\label{sec:1stProof}
We begin by proving the theorem under the stronger assumption of existence of $K$-cut points with $K=1$ and explain how to obtain the general case at the end of the section. 
For this, we say that $z\in\Z$ is a {\em cut point} if $\mathsf{e}(z)=1$ and the edge above $\ell_z$ is $\{z-1,z\}$. If cut points exist, i.e., \(\bfP(\tau<\infty)=1\), where
\[
\tau =\min\{z\geq 0\colon z \text{ is a cut point}\},
\]
they must have positive density due to ergodicity. In particular, the distribution
\[
\overline{\bfP}(\cdot)=\bfP(\,\cdot\;|0\text{ is a cut point})
\]
is well-defined, and the contact process with infection rate $\lambda>0$ dies out almost surely on $\bfP$-almost every realisation of $\G$ if and only if it dies out almost surely on $\overline{\bfP}$-almost-every $\G$. We denote by $z_k,k\geq 1,$ the $k$-th cut point to the right of $z_0=0$. By ergodicity, the sequence of cut points is well-defined and
\[
\lim_{n\to\infty}\frac{\big|\big\{z\in\{0,\dots,n\}\colon z \text{ is a cut point} \big\}\big|}{n} =: p\qquad\text{${\bfP}$-almost surely,}
\]
 where \(p\) denotes the probability (w.r.t.\ \(\bfP\)) that \(0\) is a cut point. Define, for $k\geq 1$, $C_k:=\{z_{k-1},\dots,z_k-1\}$ to be the {\em $k$-th block} in the partition of $\Z$ derived from the cut points (and $0$) and let $\mathcal{C}_k$ be the subgraph induced by $C_k$ in $\G$.
\begin{prop}\label{prop:cutting}
Let $\G$ satisfy the Assumptions~\ref{A:stat} and~\ref{A:sparse}, and let $p=\bfP(0 \text{ is a cut point})>0$. Then, the sequence $(\mathcal{C}_k)_{k\geq 0}$ is stationary under $\overline{\bfP}$ and we have
\begin{equation}\label{eq:finiteblockexp}
\overline{\bfE}|C_1|=1/p<\infty \quad\text{ as well as }\quad \overline{\bfE}|C_1|^2=(1+2\bfE[\tau])/p.
\end{equation}
\end{prop}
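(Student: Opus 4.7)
The proposition consists of three claims: stationarity of $(\mathcal{C}_k)_{k\geq 0}$ under $\overline{\bfP}$, the first-moment formula $\overline{\bfE}|C_1|=1/p$, and the second-moment formula $\overline{\bfE}|C_1|^2=(1+2\bfE[\tau])/p$. All three follow from Palm/ergodic-theoretic reasoning applied to the stationary sequence of cut-point indicators $(\1\{z\text{ is a cut point}\})_{z\in\Z}$.

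For the setup, let $T$ denote the canonical $\Z$-shift on rooted graphs, so that $T^k\G$ is $\G$ rerooted at $k$ and $\{z\text{ is a cut point}\} = T^{-z}(A)$ with $A = \{0 \text{ is a cut point}\}$. By \ref{A:stat}, $T$ preserves $\bfP$ and is ergodic. Since $\bfP(A)=p>0$, the first-return time $R := \min\{k\geq 1 \colon T^k\G\in A\}$ is $\bfP$-a.s.\ finite, Kac's first-return theorem applies, and the induced map $\hat T := T^R$ preserves $\overline{\bfP} = \bfP(\cdot\mid A)$. Since $R = |C_1|$ under $\overline{\bfP}$, Kac's formula $\overline{\bfE}[R]=1/p$ immediately yields the first-moment claim, and stationarity of $(\mathcal{C}_k)_{k\geq 0}$ follows because $\mathcal{C}_{k+1}$ is a deterministic functional of $\hat T^k\G$ and $\hat T$ preserves $\overline{\bfP}$.

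The second-moment formula is the main computational step. The plan is to apply Birkhoff's pointwise ergodic theorem to the stationary sequence $\tau_+(z) := \tau\circ T^z$, giving $\bfE[\tau] = \lim_{N\to\infty} N^{-1}\sum_{z=0}^{N-1}\tau_+(z)$ $\bfP$-a.s., and then evaluate the right-hand side via the block decomposition of $\Z$: on a block of length $L$, the functional $\tau_+$ takes the values $0, L-1, L-2, \ldots, 1$, summing to $L(L-1)/2$. Combined with the fact that the number of cut points in $[0,N)$ is $Np + o(N)$ a.s.\ and the ergodic theorem for $\hat T$, this yields
$$\bfE[\tau] = \frac{p}{2}\bigl(\overline{\bfE}|C_1|^2 - \overline{\bfE}|C_1|\bigr),$$
which rearranges to $\overline{\bfE}|C_1|^2=(1+2\bfE[\tau])/p$, an identity valid in $[0,\infty]$ and therefore also covering the case $\bfE[\tau]=\infty$.

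The main obstacle is the careful bookkeeping of the cut-point contribution $\tau_+=0$ at the left endpoint of each block, which is precisely what produces $L(L-1)/2$ rather than $L(L+1)/2$ and accounts for the additive $1$ in the final formula. Passing the limit through the block decomposition is unproblematic by monotone convergence since $\tau_+\geq 0$; once the Palm-theoretic framework is in place, no substantive further difficulties are expected.
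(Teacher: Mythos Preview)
Your proof is correct and follows essentially the same route as the paper: stationarity via the induced (cycle-stationary) shift, Kac's lemma for $\overline{\bfE}|C_1|=1/p$, and a size-biasing relation for the second moment. The only difference is presentational --- the paper cites Kac's original formula and mentions size-biasing, whereas you spell out the second-moment identity via Birkhoff's theorem applied to $\tau_+$ and the block sum $\sum_{z\in C_k}\tau_+(z)=|C_k|(|C_k|-1)/2$; this is exactly the size-biasing computation made explicit.
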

\begin{proof}
Stationarity of $(\G,0)$ under $\bfP$ yields (`cycle'-)stationarity of $(\G,0)$ under $\overline{\bfP}$ if the origin is shifted to consecutive cut points, see~\cite{Thorisson95}. In particular this implies the stationarity statement for the subgraphs $\mathcal{C}_k, k\geq 1$. The fact that $\overline{\bfE}|C_1|=1/p$ follows from Kac's Lemma \cite{Kac47} applied to the sequence $(\1\{z \text{ is a cut point}\})_{z\in\Z}$, see \cite{Kasteleyn86} for a short proof in the case of binary sequences that suffices for our purpose. Further, the expression for $\overline{\bfE}|C_1|^2$ is found in~\cite[Equation~(5)]{Kac47} or can be obtained directly by size-biasing the distribution of $|C_1|$ to move from $\overline{\bfP}$ to $\bfP$.
\end{proof}
\begin{remark}
The conclusion of Proposition~\ref{prop:cutting} is in fact still essentially valid without the assumption of ergodicity as long as there exists a cut point with positive probability. The bound on $\overline{\bfE}|C_1|$ then becomes
\[
	\overline{\bfE}|C_1|=\frac{1-\bfP(\text{there is no cut point})}{p}<\infty,
\]
see \cite[Equation~(2)]{Kasteleyn86}.
\end{remark}
We further require that, under $\overline{\mathbf{P}}$, the expected number of edges per block is finite.
\begin{prop}\label{prop:finedge}
Let $\G$ satisfy the Assumptions~\ref{A:stat} and~\ref{A:sparse}, and let $p=\bfP(0 \text{ is a cut point})>0$, then
\[
\overline{\bfE} |E(\mathcal{C}_1)|<\infty.
\]
\end{prop}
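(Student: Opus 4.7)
The plan is to reduce bounding $\overline{\bfE}|E(\mathcal{C}_1)|$ to bounding the expected sum of vertex degrees over the vertices of the first block, and then to invoke the Palm inversion formula associated with the cut points, essentially the same tool that underlies Proposition~\ref{prop:cutting}.

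First, I would observe the elementary deterministic bound
\[
2\,|E(\mathcal{C}_1)| \;\leq\; \sum_{z=0}^{\tau-1} \mathsf{deg}_\G(z),
\]
which holds because every edge of $\mathcal{C}_1$ has both its endpoints in $C_1$ and is therefore counted at most twice on the right-hand side, while edges of $\G$ leaving $C_1$ may be counted as well but only contribute non-negatively. This replaces the combinatorial quantity $|E(\mathcal{C}_1)|$ by a sum of shift-measurable functionals of $\G$, which is the form that Palm calculus can handle directly.

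Next, I would apply the inversion (cycle) formula for the shift-invariant measure $\bfP$ and the stationary sequence of cut points. Writing $\theta^z$ for the shift that moves the root to $z$, and recalling that $\{z \text{ is a cut point}\}$ is a shift-invariant event of density $p>0$ under $\bfP$, the Palm inversion formula (see, e.g., the references cited around Proposition~\ref{prop:cutting}) states that for any non-negative measurable functional $f$ of the environment,
\[
\bfE\big[f(\G)\big] \;=\; p\cdot \overline{\bfE}\Big[\sum_{z=0}^{\tau-1} f(\theta^z \G)\Big].
\]
Choosing $f(\G)=\mathsf{deg}_\G(0)$ and using Assumption~\ref{A:sparse} gives
\[
\overline{\bfE}\Big[\sum_{z=0}^{\tau-1} \mathsf{deg}_\G(z)\Big] \;=\; \frac{\Delta_\G}{p} \;<\;\infty.
\]

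Combining the two displays yields
\[
\overline{\bfE}\,|E(\mathcal{C}_1)| \;\leq\; \frac{\Delta_\G}{2p} \;<\;\infty,
\]
as claimed. There is no real obstacle here: sparsity provides the finite first moment of the degree, the hypothesis $p>0$ makes the Palm inversion well-defined, and the deterministic degree bound disposes of the edge counting. The only subtle point is making sure the inversion formula is applied to a shift-covariant functional, which is immediate for the degree at $0$, so the bookkeeping is entirely routine.
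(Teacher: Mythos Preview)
Your proof is correct and reaches the same bound $\overline{\bfE}|E(\mathcal{C}_1)|\leq \Delta_\G/(2p)$ as the paper, but by a slightly different route. The paper works through the pointwise ergodic theorem: it bounds the ratio $\big(\frac{1}{n}\sum_{i=1}^n |E(\mathcal{C}_i)|\big)\big/\big(\frac{1}{n}\sum_{i=1}^n |C_i|\big)$ above by the global edge density $\Delta_\G/2$ from~\eqref{eq:edgedensity}, and then identifies the almost-sure limits of numerator and denominator with $\overline{\bfE}|E(\mathcal{C}_1)|$ and $\overline{\bfE}|C_1|=1/p$ via Birkhoff. You instead first reduce edges to degrees via $2|E(\mathcal{C}_1)|\leq \sum_{z\in C_1}\mathsf{deg}_\G(z)$ and then apply the cycle (Palm inversion) formula directly to the degree functional. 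Your argument is a bit more direct and sidesteps the ratio manipulation; the paper's argument is more self-contained in that it uses only Birkhoff's theorem rather than invoking the full inversion formula.

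One minor notational slip: in the paper $\tau=\min\{z\ge 0: z\text{ is a cut point}\}$, so under $\overline{\bfP}$ one has $\tau=0$ and your sum $\sum_{z=0}^{\tau-1}$ would be empty. What you want in its place is $z_1=|C_1|$, the first cut point strictly to the right of $0$; with that substitution your inversion identity is precisely Kac's cycle formula for the return time to the set of cut points, and everything else goes through unchanged.
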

\begin{proof}
By~\eqref{eq:edgedensity}, the global edge density of $\G$ is $\Delta_\G/2$ $\bfP$-almost surely, and thus the same is true $\overline{\bfP}$-almost surely. In particular, under $\overline{\bfP}$ we have for any $\varepsilon>0$
\begin{equation}\label{eq:edgebound1}
\frac{\Delta_\G}{2}\geq\limsup_{n\to\infty}\frac{\sum_{i=1}^n |E(\mathcal{C}_i)|}{\sum_{i=1}^n |C_i|}\geq \lim_{n\to\infty}\frac{\sum_{i=1}^n |E(\mathcal{C}_i)|}{(1+\varepsilon)n\overline{\bfE}|C_1|},
\end{equation}
since 
\[
\lim_{n\to\infty} \frac{1}{n}\sum_{i=1}^n |C_i|=\overline{\bfE}|C_1|<\infty
\]
by Proposition~\ref{prop:cutting} and the pointwise ergodic theorem. Taking expectations in~\eqref{eq:edgebound1} and letting $\varepsilon\to 0$ yields
\[
\overline{\bfE} |E(\mathcal{C}_1)|\leq \frac{\Delta_\G}{2}\overline{\bfE} |C_1|=\frac{\Delta_\G}{2 p}<\infty,
\]
as desired.
\end{proof}
Finally, our proof uses a coupling between the contact process and a \emph{random walk in random environment}. To make this coupling effective, we require a classical result of Ledrappier. The setting is as follows: Consider a stationary and ergodic sequence $(\omega_z)_{z\in\N}$ with distribution $\mathbf{Q}$, called the \emph{environment}. Conditionally on the environment, the random walk $S$ is now defined as the $\mathbf{Q}$-almost surely well-defined Markov chain $(S_t)_{t\in\N}$ with $S_0=0$ and generator
\[
\1{\{x>0\}}\big[\omega_x\big(f(x-1)-f(x)\big)+(1-\omega_x)\big(f(x+1)-f(x)\big)\big]+\1{\{x=0\}}\big(f(1)-f(0)\big),
\]
for $x\in \{0,1,2,\dots\}$.

\begin{prop}[{Ledrappier \cite{Ledrappier82}, cf.\ \cite{Zeit04}}]\label{prop:ledrappier}
${S}$ is recurrent if and only if 
\[
	\int\log \Big(\frac{1-{\omega}_1}{{\omega}_1}\Big)\,\mathbf{Q}(\d\omega_1) \geq 0,
\]
provided this expectation is well-defined.
\end{prop}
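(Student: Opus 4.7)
The proposition is the classical recurrence criterion for one-dimensional random walks in random environment (RWRE), due to Solomon in the i.i.d.\ case and extended by Ledrappier to the stationary ergodic set-up. My plan is to follow the standard potential-function route.

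First, I would reduce recurrence of $S$ to the divergence of an explicit series over the environment. Since $S$ is a nearest-neighbour birth--death chain on $\N$ with reflection at $0$, a standard Gambler's-ruin computation on the finite interval $\{0,1,\dots,N\}$ (solving the harmonic equation $\omega_x h(x-1) + (1-\omega_x) h(x+1) = h(x)$ with $h(0)=0$, $h(N)=1$) yields, $\mathbf{Q}$-almost surely,
\[
\P\big(S \text{ hits } N \text{ before returning to } 0 \,\big|\, S_0 = 1\big) = \Big(\sum_{k=0}^{N-1}\gamma_k\Big)^{-1},\quad \gamma_0=1,\ \gamma_k = \prod_{i=1}^{k}\frac{\omega_i}{1-\omega_i}.
\]
Letting $N\to\infty$, recurrence of $S$ is $\mathbf{Q}$-a.s.\ equivalent to the divergence of the series $\sum_{k\ge 0}\gamma_k$. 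This recasts an analytic question about $S$ as a question about the growth of partial products of a stationary ergodic sequence.

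Second, I would apply Birkhoff's ergodic theorem to the logarithm. Writing $k^{-1}\log\gamma_k = -k^{-1}\sum_{i=1}^k \log((1-\omega_i)/\omega_i)$, and using that $(\omega_i)_{i\in\N}$ is stationary and ergodic under $\mathbf{Q}$ with the integrand having a well-defined mean $\mu$ by assumption, the ergodic theorem gives $k^{-1}\log\gamma_k \to -\mu$ as $k\to\infty$, $\mathbf{Q}$-a.s. When $\mu<0$, the products $\gamma_k$ grow exponentially, $\sum\gamma_k=\infty$, and $S$ is recurrent; when $\mu>0$ they decay exponentially, $\sum\gamma_k<\infty$, and $S$ is transient. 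Thus the two strict sides of the criterion are immediate consequences of Birkhoff, once the sign convention relating $\omega$ to the direction of motion is reconciled.

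The delicate and genuine obstacle is the critical case $\mu = 0$, where Birkhoff provides no first-order information. The standard tool to close this gap, which is the essence of the extension from Solomon's i.i.d.\ argument to the ergodic setting, is a recurrence lemma of Atkinson--Kesten type: if $(Y_i)_{i\in\N}$ is stationary ergodic with $\bfE[Y_1]=0$ and $Y_1$ non-degenerate, then the partial sums $T_n = \sum_{i=1}^n Y_i$ satisfy $\limsup_n T_n = +\infty$ $\mathbf{Q}$-a.s. Applying this to $Y_i = -\log((1-\omega_i)/\omega_i)$ forces $\log\gamma_n$, and hence $\gamma_n$, to attain arbitrarily large values along some subsequence, giving $\sum_k\gamma_k=\infty$ and recurrence at the boundary. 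The hardest step is therefore the correct deployment of this ergodic recurrence lemma at criticality; the rest of the argument is a clean combination of Gambler's ruin and the pointwise ergodic theorem.
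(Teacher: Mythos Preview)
Your sketch is correct and follows the standard potential/series route to the Solomon--Ledrappier criterion. The paper, however, does not actually prove this proposition: it is quoted as a known result, and the only justification given is the remark immediately following it, which points out that the half-line statement with reflection at $0$ is a direct corollary of the classical two-sided theorem in \cite{Ledrappier82,Zeit04}. So where you supply a self-contained argument---gambler's ruin to obtain the series $\sum_k\gamma_k$, Birkhoff for the strict inequalities, and an Atkinson/Kesten-type recurrence lemma for the critical case $\mu=0$---the paper simply invokes the literature and reduces to the whole-line walk by reflection. Your approach buys independence from the cited references and makes transparent why the boundary case requires an additional ergodic-theoretic input (recurrence of mean-zero ergodic cocycles, which already yields $\liminf_k|\log\gamma_k|=0$ and hence divergence of the series); the paper's approach is more economical because the result is used purely as a black box in the proof of Proposition~\ref{prop:exsubphase_K=1}. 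The sign reconciliation you flag is genuine: with the generator as written, $\omega_x$ is the leftward probability and your gambler's-ruin computation gives recurrence for $\mu\le 0$, whereas the proposition and its later application (where $\omega_k(\lambda)$ denotes the rightward probability) are consistent with recurrence for $\mu\ge 0$---so one of the two conventions has been silently flipped, and your caveat is warranted rather than a gap in your argument.
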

\begin{remark}
The standard version of this result, given in \cite{Ledrappier82,Zeit04}, is in fact formulated for the two-sided variant of the random walk in random environment. Proposition~\ref{prop:ledrappier} is a simple corollary of the standard version, obtained by reflecting the walk at $0$.
\end{remark}

As a key step, we now give the proof for the special case that cut points exist and generalise this below to the general case, where only the existence of \(K\)-cut points, for some \(K\geq 1\), is required.   

\begin{prop}\label{prop:exsubphase_K=1}
Let $\G$ satisfy the Assumptions~\ref{A:stat} and~\ref{A:sparse}, and assume $\bfP(\tau<\infty)=1$.
Then, $\bfP$-almost-surely 
\[
	\lambda_{\mathsf{c}}(\G)\equiv \lambda_{\mathsf{c}}\in(0,\infty).
\]
\end{prop}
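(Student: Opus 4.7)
My plan is to work under $\overline{\bfP}$ and to prove $\lambda_{\mathsf{c}}>0$ by coupling the contact process with a random walk in random environment (RWRE) on $\N$ reflected at $0$, then invoking Proposition~\ref{prop:ledrappier}. The upper bound $\lambda_{\mathsf{c}}(\G)\leq \lambda_{\mathsf{c}}(\mathbb{Z}_{\textup{nn}})<\infty$ is automatic from subgraph monotonicity together with $\mathbb{Z}_{\textup{nn}}\subset \G$. The transfer from $\bfP$ to $\overline{\bfP}$ is legitimate because $\bfP(\tau<\infty)=1$ together with ergodicity implies $p>0$, and almost-sure extinction is a tail event whose validity transfers between the two measures.

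The coupling exploits the block decomposition from Proposition~\ref{prop:cutting}: under $\overline{\bfP}$ the graph $\G$ is a chain of finite subgraphs $\mathcal{C}_k$ joined by single cut edges $\{z_k-1,z_k\}$, and the infection can propagate between adjacent blocks only by activating these edges. For each block $\mathcal{C}_k$, let $\psi_\lambda(\mathcal{C}_k)$ denote a uniform upper bound on the probability that an infection entering $\mathcal{C}_k$ from the left ever activates the right cut edge $\{z_k-1,z_k\}$, uniform over all admissible initial configurations on $\mathcal{C}_k\cup\{z_{k-1}-1\}$. Using the Harris graphical representation together with standard contact-process extinction estimates on a finite graph, I plan to show that, for $\lambda$ small,
\[
\psi_\lambda(\mathcal{C}_k)\leq C\lambda^{f(\mathcal{C}_k)},
\]
with $f(\mathcal{C}_k)$ a block statistic (for instance $|E(\mathcal{C}_k)|$, or more carefully a graph-distance quantity between the two cut vertices bounding $\mathcal{C}_k$) whose $\overline{\bfE}$-expectation is finite by Proposition~\ref{prop:finedge}. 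Setting $\omega_k:=1-\psi_\lambda(\mathcal{C}_k)$ then produces a stationary and ergodic environment under $\overline{\bfP}$, and the RWRE $S$ of Proposition~\ref{prop:ledrappier} in this environment is coupled to the contact process so that the rightward front of the infection reaching block $k$ forces $S$ to reach state $k$; in particular, unbounded rightward propagation of the infection forces $S$ to escape to $+\infty$.

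To verify the recurrence criterion, the bound above yields
\[
\overline{\bfE}\Big[\log\tfrac{1-\omega_1}{\omega_1}\Big]=\overline{\bfE}\Big[\log\tfrac{\psi_\lambda(\mathcal{C}_1)}{1-\psi_\lambda(\mathcal{C}_1)}\Big]\approx \overline{\bfE}[f(\mathcal{C}_1)]\log\lambda+O(1),
\]
which is finite by Proposition~\ref{prop:finedge} and of arbitrary magnitude as $\lambda\downarrow 0$. For $\lambda$ sufficiently small, the criterion of Proposition~\ref{prop:ledrappier} is met and $S$ is recurrent, hence $S$ almost surely does not escape to $+\infty$. By the coupling, the rightward front of the contact process remains bounded almost surely, and the symmetric argument for the leftward front confines the entire infection to a finite subgraph of $\G$; since the contact process on any finite graph dies out almost surely, extinction follows and therefore $\lambda<\lambda_{\mathsf{c}}(\G)$.

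The main obstacle I anticipate is the construction of the coupling itself, together with the derivation of the uniform bound on $\psi_\lambda(\mathcal{C}_k)$. The contact process is a genuinely multi-particle process and the infection may re-enter a given block many times from the left, so the uniform upper bound must govern all successive \emph{invasion attempts} simultaneously; formalising the stochastic dominance between the front of the contact process and the single-particle walk $S$ likely requires a regeneration-type argument exploiting the independence of the Harris Poisson processes across disjoint blocks. A secondary technical difficulty is that high-degree vertices inside a single block can sustain the local infection for a long time, and the resulting crossing probability of the next cut edge must be quantified uniformly in the initial state while keeping the bound compatible with the moment estimate from Proposition~\ref{prop:finedge}.
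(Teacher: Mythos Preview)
Your overall strategy—work under $\overline{\bfP}$, decompose $\G$ into blocks via cut points, dominate the rightmost infection front by a reflected RWRE on $\N$, and apply Proposition~\ref{prop:ledrappier}—is exactly the paper's approach. The genuine gap is the bound you propose for the crossing probability. The estimate $\psi_\lambda(\mathcal{C}_k)\le C\lambda^{f(\mathcal{C}_k)}$ with a block-independent constant $C$ cannot hold, whether $f$ is $|E(\mathcal{C}_k)|$ or the graph distance between the two boundary vertices: if $\mathcal{C}_k$ contains a vertex of degree $\gg 1/\lambda$ (which, under the assumptions, occurs with positive probability for every fixed $\lambda>0$), a single infection entering from the left reaches that vertex with probability close to $1$, then survives on the induced star for an exponentially long time, so the right cut edge fires with probability close to $1$. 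In such blocks $\psi_\lambda(\mathcal{C}_k)$ is near $1$, not polynomially small in $\lambda$; this is precisely the ``high-degree vertices sustain the local infection'' difficulty you flag, and it breaks your inequality rather than merely complicating its proof.

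The paper resolves both the coupling and the estimate at once. It introduces an auxiliary process $\eta\ge\xi^\dagger$ in which, at every update of the rightmost particle $X_t$, \emph{all} vertices to its left are re-infected; the block index of $X_t$ is then a genuine nearest-neighbour RWRE $Z$, and the ``multiple invasion'' issue disappears because the left side of the current block is saturated by construction. Crucially, the paper does \emph{not} try to make the right-step probability $\omega_k$ small; it instead lower-bounds the left-step probability by the elementary event that, from the fully infected state on $\mathcal{C}_k$, every recovery clock rings before any infection clock (including the single right cut edge) does, yielding $1-\omega_k(\lambda)\ge \exp\big(-|C_k|-2\lambda|E(\mathcal{C}_k)|\big)$. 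Thus $\overline{\bfE}[\log(1-\omega_1)]$ is bounded below by a finite constant via Propositions~\ref{prop:cutting} and~\ref{prop:finedge}, while $\overline{\bfE}[\log(1/\omega_1(\lambda))]\to+\infty$ as $\lambda\downarrow0$ by monotone convergence, since each block is finite and hence $\omega_1(\lambda)\downarrow 0$. One further correction to your final step: recurrence of the RWRE does not make the front bounded (recurrent walks on $\N$ are unbounded); what it yields is infinitely many returns to block $1$, hence—with $0$ permanently infected in the dominating process—infinitely many times with $\xi^\dagger_t=\{0\}$, and extinction of the original process follows from this.
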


\begin{proof}
We may work under the distribution $\overline{\bfP}$, since the almost-sure extinction of the contact process is a graph property that does not depend on where the initially infected vertex lies. Let us further modify the usual contact process $\xi$ on $\G$ in such a way that the initially infected set is $A_0=\{-1,0\}$ and that $A_0$ \emph{remains permanently infected}. It follows immediately that the resulting contact processes on the non-negative and negative half-line, respectively, evolve independently of each other and have the same distribution. Thus it suffices to consider the contact process ${\xi}^\dagger$ on the subgraph of $\G$ induced by $\{0,1,2,\dots\}$.

Note that the process $\xi^{\{-1,0\}}$ dies out $\P_\lambda$-almost surely for $\overline{\bfP}$-almost-every realisation of $\G$, if 
\begin{equation}\label{eq:daggerdiesIO}
 \P_\lambda\Big(\bigcap_{n\geq 1}\bigcup_{t>n}\big\{\xi^\dagger_t=\{0\}\big\} \Big)=1.   
\end{equation}

To show that \eqref{eq:daggerdiesIO} holds, consider another auxiliary process $\eta=(\eta_t)_{t\geq 0}$. Its dynamics are those of ${\xi}^\dagger$, but following the convention that, at the update times of the \emph{right-most infected vertex}, all vertices to the left of it become infected instantaneously. Further define 
\[
    X_t:=\max\eta_t,
\]
and let $0=J_0,J_1,J_2\dots$ denote the jump times of $X_t$. Then $\eta$ evolves according to the Markovian dynamics as $\xi^\dagger$ on the time intervals $(J_{k-1},J_k], k\geq 1$, but jumps to state $\{0,\dots,X_{J_k}\}$ instantaneously at the times $J_k$, $k\geq 1$. By attractiveness of the contact process, we have that $\eta$ dominates $\xi^\dagger$, and thus~\eqref{eq:daggerdiesIO} is implied by
\begin{equation}\label{eq:etadiesIO}
 \P_\lambda\Big(\bigcap_{n\geq 1}\bigcup_{t>n}\big\{\eta_t=\{0\}\big\} \Big)=1.   
\end{equation}

Our next step is to introduce a coarse-grained version of the right-most particle process $X=(X_t)_{t\ge 0}$. Define
\[
Y_t:= k \text{ if }X_t\in C_{k},
\]
and note that the $k$-th block $\mathcal{C}_k$ is followed by the $k$-th cut point $z_k$. Let $J'_n,$ $n\geq 0$, denote the jump times of $(Y_t)_{t\ge 0}$ and define the \emph{discrete block process}
\[
Z_n:=Y_{J'_n},\quad n\in \{0,1,\dots\}.
\]
This final reduction allows us to deduce~\eqref{eq:etadiesIO} from the recurrence of $Z=(Z_n)_{n\geq 0}$. To obtain recurrence, we now use the crucial observation that, by Assumption~\ref{A:sparse} and Proposition~\ref{prop:cutting}, $Z$ is a random walk in a stationary ergodic random environment on $\N\cup\{0\}$, which is induced by the realisation of $\G$.

Let us introduce, for $k>0$, the transition probabilities
\[
\P_\lambda(Z_{k+1}=z+1|Z_{k}=z)=:\omega_k(\lambda).
\]
Observe that $\omega_k(\lambda)$ is a random variable under $\overline{\bfP}$ that depends only on the structure of the $k$-th block $\mathcal{C}_k$. Clearly, if $X_t\in C_{k}$, the particle configuration inside 
the $k$-th block that maximises the probability that $X$ jumps to the $(k+1)$-st block in the next update is to have every single vertex in $C_{k}$ infected. Let us assume that we are in this configuration and that, without loss of generality, $\lambda<1$. Then, the time that $X$ needs to traverse the edge above $\ell_{z_{k}}$ is bounded from below by an $\operatorname{Exp}(1)$ random variable by virtue of $z_k$ being a cut point. It follows by attractiveness of the contact process that, given $\G$, the probability that the whole $k$-th block recovers before $X$ jumps to the $k+1$-st block is at least
\begin{equation}\label{eq:lowerboundomegak}
\textup{e}^{-|C_{k}|-2\lambda |E(\mathcal{C}_k)|}\leq 1-\omega_k(\lambda), \quad k\geq 1.
\end{equation}
%Setting 
%$
%\overline{\omega}_k(\lambda):=1-\exp(-|C_{k}|-2\lambda |E(\mathcal{C}_k)|),
%$
%we obtain that $Z$ is recurrent, if the modified random walk in random environment $\overline{Z}$ induced by the %$\overline{\omega}_k(\lambda), k\ge 1$, is recurrent. 
%
%Our goal is now to apply Proposition~\ref{prop:ledrappier} in order to show recurrence. 
Now, note that $\omega_1(\lambda)\downarrow 0$, almost surely, as $\lambda\downarrow 0$, since the first block is a finite graph. Hence, 
\[
\begin{aligned}
    \overline{\bfE}\log\big((1-\omega(\lambda))/\omega(\lambda)\big) 
    & 
        = \overline{\bfE}\log(1-\omega(\lambda))-\overline{\bf{E}}\log\omega(\lambda)
    \geq -\overline{\bfE}(|C_1|+2\lambda|E_1|) +\overline{\bfE}\log(1/\omega(\lambda)),
    \\ &
        \geq -\overline{\bfE}(|C_1|+2 |E_1|) +\overline{\bfE}\log(1/\omega(\lambda)),
\end{aligned}
\]
by~\eqref{eq:lowerboundomegak} and the fact that we restricted ourselves to \(\lambda<1\). By Propositions~\ref{prop:cutting} and~\ref{prop:finedge}, the first expectation is finite, while the second one tends to \(+\infty\), as \(\lambda\downarrow 0\), by monotone convergence; Proposition~\ref{prop:ledrappier} yields the desired recurrence. 
%
%%Hence, by dominated convergence and Jensen's inequality, it suffices to show that
%\[
%    \inf_{\lambda\in(0,1)}\overline{\bfE}\log\big(1-\overline{\omega}_1(\lambda)\big)>-\infty.
%%\]
%But the left-hand side is simply
%\begin{equation}\label{eq:needinexamples}
%\inf_{\lambda\in(0,1)}\big(-\overline{\bfE}|C_{1}|-2\lambda \overline{\bfE} |E(\mathcal{C}_1)|\big)=-%\overline{\bfE}|C_1|-2 \overline{\bfE} |E(\mathcal{C}_1)|,
%\end{equation}
%which is finite by Propositions~\ref{prop:cutting} and \ref{prop:finedge}.
\end{proof}

\begin{remark}
Consider an i.i.d.\ sequence $K_1,\dots,K_2$ of copies of a random variable $K$ supported on $\N$. Identify every vertex $n$ of the nearest-neighbour lattice $\N$ with the root of a rooted clique of size $K_n$. Then, the above proof shows that the contact process has a non-trivial phase transition on the induced graph $\G$ if $\bfE K^2<\infty$. On the other hand, $\bfE K^2=\infty$ implies that
\[
\bfE\log(1-\overline{\omega}_1(\lambda))=-\infty
\]
for any $\lambda>0$ and the fact that each block consists of at least one vertex provides the trivial uniform bound
\[
\bfE\log(\overline{\omega}_1(\lambda))\geq \log\big(1-\textup{e}^{-1}\big)>-\infty.
\]
Hence, by Ledrappier's original theorem, e.g., in the form of~\cite[Theorem~2.1.2]{Zeit04}, the corresponding $\overline{Z}$ process is transient for any $\lambda$. We believe that the contact process always survives in this case. Studying this toy model in higher dimensions might be instructive before attempting to generalise the results of this work to multi-dimensional models with a more complex cluster structure. 
\end{remark}

We conclude this section by lifting the previous domination argument from cut points to $K$-cut points.
\begin{proof}[Proof of Theorem~\ref{thm:exsubphase}]
Assume for the moment, that we can decompose $\G$ stationarily into consecutive disjoint blocks of vertices $C_k$ corresponding to induced subgraphs $\mathcal{C}_k$ such that there are at most $K$ edges joining each block to its successor. Then, only a minor modification in the proof of the $K=1$ case is necessary, namely the traversal time between blocks is now uniformly lower bounded by an $\operatorname{Exp}(K)$ random variable. This has obviously no qualitative effect on the argument. Hence it suffices to show that such a decomposition exists, which does not trivially follow from the existence of $K$-cut points if $K\neq 1$.

To construct such a decomposition, we say that $z\in \Z$ is a \emph{$(K,L)$-cut point}, if there are precisely $K$ edges above $\ell_z$ and none of them is longer than $L-1$. It follows from translation invariance and local finiteness of $\G$ that, if $K$-cut points exists, so do $(K,L)$-cut points for sufficiently large $L$. Conditionally on $z_0=0$ being a $(K,L)$-cut point, we recursively declare $z_k$ to be the $(L+1)$-st $(K,L)$-cut point after $z_{k-1}, k\geq 1$. Then, the induced block structure has the desired properties and this concludes the argument.
\end{proof}

\subsection{Analysis of specific random graph models}\label{sec:proof-examples}
It remains to verify the assumptions of Theorem~\ref{thm:exsubphase} for our example models to prove Theorems~\ref{thm:LRP}--\ref{thm:WDRCM}.
\begin{proof}[Proof of Theorem~\ref{thm:LRP}]
That $\G$ is stationary and ergodic follows from translation invariance of the edge probabilities and independence of edges. Sparsity is equivalent to $\sum_{k}\varphi(k)<\infty$. Consider the link $\ell_0=\{-1,0\}$, then, the number of edges above is given by 
\[
	\mathsf{e}=\sum_{x=-\infty}^{-1}\sum_{y=0}^\infty \chi(x,y),
\]
where each $\chi(x,y)$ is $\operatorname{Bernoulli}(\varphi(y-x))$-distributed and the $\chi(x,y)$ are all independent. Since $\sum_k k\varphi(k)<\infty$ by assumption, it follows that
\[
\mathbf{P}(\mathsf{e}=1)>0,
\]
and thus cut points have positive density.
\end{proof}
\begin{proof}[Proof of Theorems~\ref{thm:DBP} and~\ref{thm:DBP2}]
Consider the underlying Boolean model under its stationary distribution. Assume for the moment that radii can be arbitrarily small, i.e., $\inf\textrm{supp}(\rho)=0$. It is a classical result of continuum percolation theory, see e.g.~\cite{meester_continuum_1996}, that the probability that $0$ is not covered by any ball of the Boolean model is positive if and only if $\int r \rho(\d r)<\infty$. Hence, under the Palm distribution and after augmentation, $0\in V(\G)=\Z$ has a positive probability of being a cut point. Now assume that $\inf\textrm{supp}(\rho)\leq K$ for some integer $K$. Then, the same reasoning applies but with the property of `being covered' is replaced by the property of `being covered by at most $2K$ balls. This leads to a positive density of $2K$-cut points, which suffices to deduce Theorem~\ref{thm:DBP} from Theorem~\ref{thm:exsubphase}. Finally, suppose the radii of the balls of the Boolean model are independent -- this is the classical setting for continuum percolation. In this case
\[
	\int r \rho(\d r)=\infty
\]
implies that $0$ has infinite degree in $\G$, see~\cite{meester_continuum_1996}, and hence the contact process trivially survives at any positive intensity, which establishes Theorem~\ref{thm:DBP2}.
\end{proof}
\begin{remark}
The i.i.d.\ assumption in Theorem~\ref{thm:DBP2} can be relaxed, since the proof of almost everywhere infinite degree given in~\cite{meester_continuum_1996} also works under less restrictive assumptions. However, we are not aware of a simple method to relax them to the minimal assumptions of Theorem~\ref{thm:DBP} and this is not in the scope of the present work.
\end{remark}
\begin{proof}[Proof of Theorem~\ref{thm:WDRCM}]
	We again prove Theorem~\ref{thm:WDRCM} by verifying the assumptions of Theorem~\ref{thm:exsubphase}. Stationarity and ergodicity subject to~\ref{A:stat} follow from the stationarity of the underlying vertex set \(\X\), and translation invariance and conditional independence of the edges. The existence of cut points is a direct consequence of Property~\eqref{eq:WDRCMcut} and~\cite[Section~4]{gracar2023finiteness}, which has already been outlined above. It thus remains to prove sparsity~\ref{A:sparse}. Using independence of the marks, Campbell's formula~\cite{last_lectures_2018} and stationarity of the underlying point process, we obtain
	\[
		\begin{aligned}
			\Delta_\G 
			&
				= 2 + \sum_{n=0}^\infty \int_{0}^1 \bfE\Big[ \sum_{\X_j\in \X} \1{\{\{\X_j,\X_0\}\in E(\calG)\}}\1{\{2^n<|X_j|\leq 2^{n+1}\}} \, \big| \, U_0=u \Big] \d u
			\\ &
				= 2+ 2\sum_{n=0}^\infty \int_0^1 \int_0^1 \int_{2^n}^{2^{n+1}} \varphi(u,v,x) \, \d x \, \d u  \, \d v.
		\end{aligned}
	\]   
	Using that \(\varphi\) is non-increasing in the distance argument and~\eqref{eq:WDRCMcut}, we thus infer
	\[
		\begin{aligned}
			\Delta_\G 
			&
				\leq 2 + 4\sum_{n=0}^\infty \Big(2^{-n-2\mu n} + 2^n\int_{2^{-n-n\mu}}^1 \int_{2^{-n-n\mu}}^1 \varphi(u,v,2^n) \, \d u \, \d v\Big) <\infty,	
		\end{aligned}
	\] 
	as desired. 
\end{proof}

\subsection{Absence of subcritical phase in higher dimensions in the presence of heavy-tailed degrees.} \label{sec:higherDim}
In this final section, we prove Theorem~\ref{thm:higherDim}, the absence of a subcritical phase when the degrees are sufficiently heavy-tailed.
\begin{proof}[Proof of Theorem~\ref{thm:higherDim}.]
	We employ a block-renormalisation argument. Fix any infection rate \(\lambda\in(0,1)\) for the contact process on \(\G\), which will remain unchanged during the course of the proof. For \(L\in\{2^d,3^d,\dots\}\) let \(B_L(v)\), \(v\in\Z^d\), be the box \(L^{1/d}v+(-L^{1/d}/2, L^{1/d}/2]^d\) of volume \(L\), centred at \(L^{1/d} v\). Clearly, \((B_L(v)\colon v\in\Z^d)\) provides a disjoint partition of \(\Z^d\). We plan to compare the contact process on \(\G\) to an \emph{oriented bond-percolation process} on a large supercritical Bernoulli site-percolation cluster of \(\Z^d\). Let us denote by \(\Z^d_p\) an instance of i.i.d.\ Bernoulli$(p)$-site percolation on the nearest-neighbour lattice \(\Z^d\) and by \(\scrO:=\scrO_{p,q}\) an oriented bond-percolation process on \(\Z^d_p\times\N_0\), where each oriented nearest-neighbour edge of the form \((v,n)\to (v\pm e_i,n+1)\) is open with probability \(q\) conditionally on the involved vertices being open in $\Z^d_p$. Here, \(v\in\Z^d_p\), \(n\in\N_0\), and \(e_i\) denotes the \(i\)-th unit vector, where $i\in\{1,\dots,d\}$. Let us think of the case of independent oriented edges first and note that \(\scrO\) contains an oriented path to infinity almost surely if \(p\) and \(q\) are large enough. This can easily be seen by first choosing \(p>p_c^{\text{site}}(\Z^d)\). Then, \(\Z^d_p\) contains an infinite self-avoiding path \((v_0,v_1,v_2,\dots)\). Identifying \(v_i\) with \(i\in\N_0\), \(\scrO\) contains an oriented percolation process on \(\N_0\times\N_0\), and it is well-known that the latter contains an oriented path to infinity for sufficiently large \(q\). Let us next consider the generalisation to \(K\)-dependent edges, that is, whether an edge is open or not only depends on the \(K\)-neighbourhood of the edge. Then, an application of the well-known result of Ligget, Schonmann, and Stacey~\cite{ligget1997_Domination} yields the same result, perhaps requiring an increased \(q\).
	The block-renormalisation argument now works as follows:
    \begin{itemize}
        \item A block \(B_L(v)\) is {good} if its induced subgraph \(\G\cap B_L(v)\) contains a {star} and a vertex of particular high degree. Each good box corresponds to an open site in \(\Z^d_p\). \item We then consider potential infection paths from the star in a good box to the star of a neighbouring good box in a predefined time interval  and identify these paths with the oriented edges.  
	\end{itemize}
    Let us already mention that boxes are good independently of each other, because the radii of the balls associated with each box are independent. Similarly, the existence of the considered infection paths are at most two-dependent, as we will show below.
    
	Before we give the precise construction, let us collect some known properties of star graphs. Let \(\scrS_k\) be the star graph with root \(o\) and \(k\) leaves. Recall that \(\lambda<1\) and consider \(\xi_t^\ell(\scrS_k)\), the contact process on \(\scrS_k\) starting with the root \(o\) and \(\ell\) leaves being infected. Set \(K= \lceil k \lambda/(1+2\lambda)\rceil\) and fix any \(\varepsilon_1>0\), then \cite[Lemma~2.6]{huang_contact_2020-1} ensures the existence of two constants \(c_1=c_1(\lambda,\varepsilon)>0\) and \(c_2=c_2(\lambda,\varepsilon)>0\) such that
	\begin{equation}\label{eq:star1}
		\P_\lambda \Big(\inf_{t\leq T}|\xi_t^K(\scrS_k)|\geq  \varepsilon_1 K \Big)\geq 1- \mathrm{e}^{-c_1 k}, \quad \text{ where } T=\mathrm{e}^{c_2 k}.
	\end{equation}
	In words, if a certain proportion of the leaves and the centre is infected in the beginning, then the infection survives on \(\scrS_k\) exponentially long while keeping a positive proportion of leaves infected with a probability that is exponentially large in the star's size. Moreover, it is easy to deduce that~\eqref{eq:star1} remains valid even if the centre is not infected at time \(0\). Indeed, the number of leaves that recover until the first infection of \(o\) is geometrically distributed with parameter \(1/\lambda\), see also~\cite{huang_contact_2020-1}. In particular, the probability that the number of recovered leaves exceeds \(k/1000\) decays exponentially in \(k\). Due to the strong Markov property, we can restart the process at the first time of infection of the centre and apply~\eqref{eq:star1}, potentially slightly adapting the constants involved. Finally,~\cite[Lemma~2.6]{huang_contact_2020-1} yields that
	\begin{equation}\label{eq:star2}
		\P_\lambda\Big(\inf_{k^{2/3}\leq t\leq T}|\xi_t^0(\scrS_k)|\geq \varepsilon_1 K\Big)\geq 1-c_3 k^{-1/3},
	\end{equation}
	for sufficiently large \(k\), where \(c_3=c_3(\lambda)>0\). That is, if only the centre is infected at time \(0\), then the contact process still has a large probability to infect a positive proportion of its leaves until time \(k^{2/3}\) that persists exponentially long if the star is sufficiently large. 
	
	Let us finally move to the core of our proof, the coupling of the contact process on \(\G\) and the oriented percolation process \(\scrO\). As outlined above, we identify each box \(B_L(v)\) of \(\G\) with the site \(v\in\Z^d\). Choose some \(\varepsilon\in(0,\gamma-1/d)\), which is possible by assumption. We call the box \(B_L(v)\) \emph{good} if there is some \(\x\in B_L(v)\) with \(\deg_{B_L(v)}(\x)\geq L^{\gamma-\varepsilon}\), where \(\deg_{B_L(v)}(\x)\) denotes the degree of \(\x\) restricted to the subgraph of $\G$ induced by \(B_L(v)\). We call the vertex of largest degree in a good box the \emph{star} of \(B_L(v)\) and denote it by \(o_v\). Clearly, whether a box is good or not is independent of all other boxes. Moreover, there exist dimension-dependent constants \(c,C>0\) such that \(B_L(v)\) contains a star if \(B_{cL}(v)\) contains a vertex of radius no smaller than \(CL^{(\gamma-\varepsilon)/d}\). Hence, by independence of the radii,
	\[
		\begin{aligned}
			\bfP(B_L(v) \text{ is good}) 
			&
				\geq \bfP\big(\exists\, \x\in B_{cL}(v)\colon U_x^{-\gamma}\geq C L^{\gamma-\varepsilon}\big)
				=  1- \bfP\big(\forall \, \x\in B_{cL}(v)\colon U_x^{-\gamma}< C L^{\gamma-\varepsilon}\big)
			\\ &
				\geq 1-\mathrm{e}^{-L^{\varepsilon'}}=:p_L,
		\end{aligned}
	\]    
	for some \(\varepsilon'>0\). In particular, \(B_L(v)\) is good with a probability arbitrarily close to $1$, if \(L\) is chosen large enough. We now declare a site \(v\in\scrO\) open if \(B_L(v)\) is good, to produce a supercritical configuration \(\Z^d_{p_L}\) with positive probability. Next, given a realisation of good boxes and corresponding open sites, respectively, we now define the oriented bond-percolation process. This is most straightforward in the graphical representation of the contact process, see~\cite{liggett_interacting_1985}. Denote by \(\scrS_v\) the star graph induced by \(o_v\) and its neighbours in the good box \(B_L(v)\) and set \(k_L=L^{\gamma-\varepsilon}\). To utilise the above results about star graphs, we define a sequence of discrete time steps \((t_n)_{n\in\N}\) with \(t_0=0\) and \(t_{n+1}-t_n=\lfloor \mathrm{e}^{c_2 k_L}\rfloor\) for all \(n\in\N\). For the sake of readability, we omit the Gauß brackets from here on onwards. We now consider a nearest-neighbour pair \((v,v+e_i)\) and consider the oriented edges \(((v,n)\to(v+e_i,n+1))_{n\in\N}\) (treating the pair \((v,v-e_i)\) analogously). Let \(\hat{\xi}_t\) be the contact process restricted to \(\G\cap(B_L(v)\cup B_L(v+e_i))\), where we suppress the dependence of \(v\) and \(e_i\) for convenience, and let \((\hat{\xi}_{n,t})_{n\in\N}\) be a sequence of independent copies of \(\hat{\xi}_t\). By the Markov property, for any \(t<t_1\) and \(n\in\N\), we have that \(\hat{\xi}_{t+t_n}\) and \(\hat{\xi}_{n,t}^{\hat{\xi}_{t_n}}\) follow the same law. We now declare the oriented edge \((v,n)\to(v+e_i,n+1)\) open with a probability that solely depends on \(\hat{\xi}_{n}\) guaranteeing independence of the edges among the time steps. Let us denote by \(\hat{\xi}_{n,t}^\ell(\scrS_v)\) the contact process restricted to the star graph of \(B_L(v)\) with initially \(\ell\in\N\) vertices of it infected. If \(\ell=0\) we assume the root \(o_v\) to be infected. We declare the edge \((v,n)\to(v+e_i,n+1)\) open, if
\begin{enumerate}[(i)]
	\item 
		we have \(\inf_{t<t_1}|\hat{\xi}_{n,t}^{K_L}(\scrS_{v})|\geq \varepsilon_2 K_L\), with \(K_L=\varepsilon_1 k_L \lambda/(1+2\lambda)\) for some fixed \(\varepsilon_2<\varepsilon_1\). That is, the infection persists with positive density on \(\scrS_v\) during the time interval \([t_n,t_{n+1})\) if initially enough of its vertices have been infected.
	\item
		There exists \(\tau<t_{1}-k_L^{2/3}\) such that \(o_{v+e_i}\in\hat{\xi}_{n,\tau}^{K_L}\). That is, if a large enough proportion of \(\scrS_v\) is infected at time \(t_n\), the infection reaches the centre \(o_{v+e_i}\) of the star in the neighbouring box before time \(t_{n+1}-k_L^{2/3}\). 	
	\item 
		On the event \(\tau<t_{1}-k_L^{2/3}\) of Item~(ii), we have that at least \(\varepsilon_1 K_L\) vertices of the star \(\scrS_{v+e_i}\) are infected at time \(t_1\), if we start the infection at time \(\tau\) with only the centre being infected. Note that this has the same probability as the event \(|\hat{\xi}_{n,t_1-\tau}(\scrS_{v+e_i})|\geq \varepsilon_1 K_L\). That is, if there exists an infection path defined in~(ii), then the star \(\scrS_{v+e_i}\) builds mass until time \(t_{n+1}\). 		 
\end{enumerate}
Combining,~\eqref{eq:star1} and~\eqref{eq:star2} with the strong Markov property, both Item~(i) and~(iii) hold each with probability at least \(1-\delta/3\) for any \(\delta\) if \(L\) is chosen large enough. It thus remains to verify Item~(ii). To this end, recall that all nearest-neighbour edges are present in \(\G\). Hence, there exists a path from \(o_v\) to \(o_{v+e_i}\) no longer than \(2^{1+1/d}L^{1/d}\) that is completely contained in \(B_L(v)\cup B_L(v+e_i)\). Consider the event, that the path directly transmits the infection without using any edge twice. That is, each vertex \(v_j\) of the path infects its subsequent vertex \(v_{j+1}\) before recovering itself. By the memory-less property of the Exponential distribution, this happens with probability \(\lambda/(1+\lambda)\). Therefore, the probability, that the path can directly be traversed without any recovery event happening is bounded by \((\lambda/(1+\lambda))^{2^{1+1/d} L^{1/d}}\). Now, conditionally on the Event~(i), a positive proportion of \(\scrS_v\) remains infected during the whole considered time interval. By an elementary Poisson concentration argument, there exists some \(c_4=c_4(\lambda)>0\) such that \(o_v\) recovers and gets reinfected at least \(c_4 k_L\) times before time \(t_1-k_L^{2/3}\). If we start an independent new trial to pass the infection directly through the path after each reinfection of \(o_v\), then we may infer from attractiveness of the contact process~\cite{liggett_interacting_1985} that the probability that Event~(ii) happens is bounded below by the probability that a Binomial random variable with \(\mathrm e^{c_4 k_L} = \mathrm{e}^{c_4 L^{\gamma-\varepsilon}}\) trials and success probability \((\lambda/(1+\lambda))^{2^{1+1/d} L^{1/d}}\) is strictly positive. The expectation of that Bernoulli random variable satisfies
\begin{equation}\label{eq:infectionPath}
	\exp\Big(c_4 L^{\gamma-\varepsilon} - 2^{1+1/d}\log((1+\lambda)/\lambda)L^{1/d}\Big) \longrightarrow \infty,
\end{equation}
as \(L\to\infty\), by the assumption \(\varepsilon<\gamma-1/d\). Combining these observations, the Events~(i),~(ii), and~(iii) all occur simultaneously with probability no smaller than \(1-\delta\) for \(L\) large enough. Furthermore, note that the infection event only considers the contact process in a specific time interval with specified initial conditions, specific infection paths and the star graphs at a path's end and ignores all other edges of \(\G\) and all other infected sites. Therefore, the oriented edges are spatially at most two dependent and, as outlined above, independent across time. Thus, choosing \(\delta\) small enough and \(L\) large enough, the corresponding oriented percolation process \(\scrO_{p_L,1-\delta}\) contains an infinite oriented path as explained above. In particular, the origin \(0\in\scrO\) has a positive probability of starting an infinite oriented path. By construction, such a path yields an infinite infection path in the graphical construction of the contact process, if we start with the whole box \(B_L(0)\) being infected. As \(\lambda\) was chosen arbitrarily and \(\lambda_\mathsf{c}\) does not depend on the \emph{finite} initial condition, this implies \(\lambda_\mathsf{c}=0\). Let us mention for the last step that, again by attractiveness, the original contact process on \(\G\) dominates the union of all the contact processes that were used in the previous step.    
\end{proof}

\noindent\textbf{Acknowledgement}. We thank Peter Gracar and Marco Seiler for insightful discussions about the research presented here. We further would like to extend our gratitude to the organisers of the workshop \emph{Long-range Phenomena in Percolation} at the University of Cologne in September 2024 for providing a very stimulating environment for these discussions to take place. Finally, we thank Johannes B\"aumler for pointing out a flaw in our predictions for the presence of a phase transition in higher dimension in an earlier version, which motivated us to prove Theorem~\ref{thm:higherDim}. 

\noindent\textbf{Funding acknowledgement.} CM's research is funded by Deutsche Forschungsgemeinschaft (DFG, German Research Foundation) – SPP 2265 443916008. BJ and LL acknowledge the financial support of the Leibniz Association within the Leibniz Junior Research Group on \emph{Probabilistic Methods for Dynamic Communication Networks} as part of the Leibniz Competition as well as  the Berlin Cluster of Excellence {\em MATH+} through the project {\em EF45-3} on {\em Data Transmission in Dynamical Random Networks}.

\section*{References}
\renewcommand*{\bibfont}{\footnotesize}
\printbibliography[heading = none]

\end{document}